\documentclass[twoside,11pt,reqno]{amsart}
\usepackage{amsmath,amssymb,amscd,mathrsfs,amscd, todonotes}
\usepackage{graphics,verbatim}
\usepackage{todonotes}
\usepackage{enumitem}
\usepackage{hyperref}

\oddsidemargin .2in \evensidemargin .2in \textwidth 6in



\newcommand{\losemi}{{\otimes \kern -.78em \ltimes}}
\newcommand{\rosemi}{{\otimes \kern -.78em \rtimes}}








\makeatletter
\newcommand{\leqnomode}{\tagsleft@true}
\newcommand{\reqnomode}{\tagsleft@false}
\makeatother





\newtheorem{theorem}{Theorem}[subsection]

\makeatletter\let\c@fact\c@theorem\makeatother

\makeatletter\let\c@note\c@theorem\makeatother

\newtheorem{lemma}{Lemma}[subsection]
\makeatletter\let\c@lemma\c@theorem\makeatother

\makeatletter\let\c@lemma\c@theorem\makeatother

\makeatletter\let\c@alg\c@theorem\makeatother

\newtheorem{prop}{Proposition}[subsection]
\makeatletter\let\c@prop\c@theorem\makeatother

\makeatletter\let\c@conj\c@theorem\makeatother

\newtheorem{cor}{Corollary}[subsection]
\makeatletter\let\c@cor\c@theorem\makeatother

\newtheorem{defn}{Definition}[subsection]
\makeatletter\let\c@defn\c@theorem\makeatother

\theoremstyle{definition}

\newtheorem{remark}{Remark}[subsection]
\makeatletter\let\c@remark\c@theorem\makeatother

\makeatletter\let\c@example\c@theorem\makeatother
\numberwithin{equation}{subsection}

%
%
\usepackage[capitalise]{cleveref}
\crefname{theorem}{Theorem}{Theorems}
\crefname{fact}{Fact}{Facts}
\crefname{note}{Note}{Notes}
\crefname{lemma}{Lemma}{Lemmas}
\crefname{alg}{Algorithm}{Algorithms}
\crefname{remark}{Remark}{Remarks}
\crefname{example}{Example}{Examples}
\crefname{prop}{Proposition}{Propositions}
\crefname{conj}{Conjecture}{Conjectures}
\crefname{cor}{Corollary}{Corollaries}
\crefname{defn}{Definition}{Definitions}
\crefname{equation}{\!\!}{\!\!} 


\newcounter{listequation}


\begin{document}
\title{On $(p,r)$-Filtrations and Tilting Modules}

\author{Paul Sobaje}
\address{Department of Mathematics \\
          University of Georgia \\
          Athens, GA 30602}
\email{sobaje@uga.edu}
\date{\today}
\subjclass[2010]{Primary 20G05, 17B10}

\begin{abstract}
We study the relationship between Donkin's Tilting Module Conjecture and Donkin's Good $(p,r)$-Filtration Conjecture.  Our main result was motivated by a result of Kildetoft and Nakano showing that the Tilting Module Conjecture implies one direction of the Good $(p,r)$-Filtration Conjecture.  We observe that the converse nearly holds; in particular a weaker version of the Good $(p,r)$-Filtration Conjecture implies the Tilting Module Conjecture.  
\end{abstract}

\maketitle

\section{Introduction}

\subsection{}
Let $k$ be an algebraically closed field of characteristic $p>0$, and let $G$ be a simple and simply connected algebraic group over $k$.  Let $G_r$ denote the $r$-th Frobenius kernel of $G$.  Fix a maximal torus $T$, and a set of dominant weights $X(T)_+$.  Let $X_r(T) \subseteq X(T)_+$ be the set of $p^r$-restricted dominant weights.

In this article we investigate the relationship between Donkin's Tilting Module Conjecture and Donkin's Good $(p,r)$-Filtration Conjecture.  Recall that for each $\lambda \in X(T)_+$ there is a simple module $L(\lambda)$ having highest weight $\lambda$, as well as highest weight modules $\Delta(\lambda)$ and $\nabla(\lambda)$, known as a Weyl module and an induced module respectively.  A $G$-module is said to have a good filtration (resp. Weyl filtration) if it has a filtration with quotients that are induced modules (resp. Weyl modules).  A finite dimensional tilting module is a finite dimensional $G$-module $M$ such that both $M$ and $M^*$ have a good filtration, and there is a unique indecomposable tilting module $T(\lambda)$ having highest weight $\lambda$.  Finally, $\lambda$ can be uniquely written as $\lambda_0 + p^r\lambda_1$, where $\lambda_0 \in X_r(T)$.  We can define the modules $\nabla^{(p,r)}(\lambda) := L(\lambda_0) \otimes \nabla(\lambda_1)^{(r)}$, and $\Delta^{(p,r)}(\lambda) := L(\lambda_0) \otimes \Delta(\lambda_1)^{(r)}$.  A module has a good $(p,r)$-filtration (resp. $(p,r)$-Weyl filtration) if it has a filtration such that each quotient is isomorphic to some $\nabla^{(p,r)}(\lambda)$ (resp. $\Delta^{(p,r)}(\lambda)$).

The $r$-th Steinberg module $\textup{St}_r$, the simple $G$-module of highest weight $(p^r-1)\rho$, plays a prominent role in the representation theory of $G$.  In 1990 at MSRI, Donkin formulated several conjectures that, when true, shed some light on this.  Consider the following statements:

\vspace{0.1in}

\noindent\textbf{(S1):} For every $\lambda \in X_r(T)$, $T((p^r-1)\rho+\lambda)$ is indecomposable over $G_r$.

\vspace{0.04in}

\noindent\textbf{(S2):} If $\textup{St}_r \otimes M$ has a good filtration, then $M$ has a good $(p,r)$-filtration.

\vspace{0.04in}

\noindent\textbf{(S3):} If $M$ has a good $(p,r)$-filtration, then $\textup{St}_r \otimes M$ has a good filtration.

\vspace{0.04in}

\noindent\textbf{(S4):} $\textup{St}_r \otimes L(\lambda)$ is a tilting module for every $\lambda \in X_r(T)$.

\vspace{0.04in}

\noindent\textbf{(S5):} $\nabla(\lambda)$ has a good $(p,r)$-filtration for every $\lambda \in X(T)_+$.

\vspace{0.1in}

\textbf{(S1)} is Donkin's Tilting Module Conjecture, and is known to hold when $p \ge 2h-2$, where $h$ is the Coxeter number of $G$ \cite[II.11]{J}.  It also holds for all $p$ when $G = SL_3$ (see recent work by Donkin on this \cite{D3}).  Statements \textbf{(S2)} and \textbf{(S3)}, taken together, comprise Donkin's Good $(p,r)$-Filtration Conjecture.  Andersen \cite{A} showed that \textbf{(S3)} is equivalent to \textbf{(S4)}, and proved that both hold when $p \ge 2h-2$.  Kildetoft and Nakano \cite{KN} gave two alternate proofs that \textbf{(S4)} holds when $p \ge 2h-2$, the second of which came by showing that \textbf{(S1)} implies \textbf{(S4)}.  As a consequence, it immediately follows that \textbf{(S4)} also holds for $SL_3$.  They also verified this last fact directly, as well as calculating that \textbf{(S4)} holds in all characteristics less than $2h-2$ for $SL_4$, and $G$ of types $B_2$ and $G_2$ (with the exception of $p=7$ in the $G_2$ case).  \textbf{(S5)}, which is a special case of \textbf{(S2)}, was shown by Parshall and Scott \cite{PS} to hold if $p \ge 2h-2$ and if Lusztig's character formula holds for all restricted weights (we note that they actually proved the analogous statement for Weyl modules, which is equivalent to the statement above).

\subsection{}
In recent work (\cite{So1}, \cite{So2}) we have studied Donkin's Tilting Module Conjecture, and the related issue of trying to find any $G$-structure for the projective indecomposable $G_r$-modules (the ``Humphreys-Verma Conjecture'').  The work of Kildetoft-Nakano is therefore of much interest as it pertains to these problems.  Since it provides a necessary condition for Donkin's Tilting Module Conjecture to be true, it could potentially be used to find a counterexample should one exist.  The fact that Kildetoft and Nakano were able to verify \textbf{(S4)} in a number of low rank cases where the status of \textbf{(S1)} is not known also suggests that the former may be an easier condition to check.  Furthermore, a clear question raised by their work is whether or not the converse statement, that \textbf{(S4)} implies \textbf{(S1)}, is true.  If it is, then we immediately have new cases for which the Tilting Module Conjecture, and therefore the Humphreys-Verma Conjecture, both hold.

It is this last question that is the primary thrust of this paper, though we give several other results throughout that we believe will be helpful in studying these conjectures going forward.  In order to present our main result in this direction, we will give two more conditions related to the five statements above.  For each $\lambda \in X_r(T)$, set $\hat{\lambda} := 2(p^r-1)\rho + w_0\lambda$ (to be precise, this notation should also reference $r$, since $\lambda$ is also in $X_s(T)$ for any $s > r$, but we will assume that an $r$ has been fixed).

It is well known that $L(\lambda)$ is a $G$-submodule of $T(\hat{\lambda})$ having multiplicity one.  We now formulate the following:

\vspace{0.1in}

\noindent \textbf{(S6):} $\textup{St}_r \otimes (T(\hat{\lambda})/L(\lambda))$ is tilting for every $\lambda \in X_r(T)$.

\vspace{0.04in}

\noindent \textbf{(S7):} $\nabla(\hat{\lambda})$ has a good $(p,r)$-filtration for every $\lambda \in X_r(T)$.

\vspace{0.1in}

Note that \textbf{(S7)} is just a special case of \textbf{(S5)}.  As for \textbf{(S6)}, tensoring with the $r$-th Steinberg module gives a short exact sequence
\begin{equation}\label{split}0 \rightarrow \textup{St}_r \otimes L(\lambda) \rightarrow \textup{St}_r \otimes T(\hat{\lambda}) \rightarrow \textup{St}_r \otimes (T(\hat{\lambda})/L(\lambda)) \rightarrow 0.\end{equation}
If $\textup{St}_r \otimes (T(\hat{\lambda})/L(\lambda))$ is tilting, then $\textup{St}_r \otimes L(\lambda)$ has a Weyl filtration, which then implies it is tilting since it is $\tau$-invariant (with $\tau$ as defined in the next section).  By basic properties of tilting modules, the sequence in (\ref{split}) would then split, and it is not hard to see that \textbf{(S6)} is in fact equivalent to the splitting of (\ref{split}).

We summarize our main results, which will be proved in Section 5.

\begin{theorem}
The following hold:

\vspace{0.08in}
\noindent \textup{(a)} Statements \textup{\textbf{(S1)}} and \textbf{\textup{(S6)}} are equivalent.

\vspace{0.05in}
\noindent \textup{(b)} Statements \textup{\textbf{(S4)}} and \textup{\textbf{(S7)}} together imply \textup{\textbf{(S1)}}.
\end{theorem}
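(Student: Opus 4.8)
My plan is to exploit the short exact sequence (\ref{split}) together with the observation (already made in the excerpt) that \textbf{(S6)} is equivalent to the splitting of (\ref{split}), and that \textbf{(S1)} is equivalent to \textbf{(S6)} by part (a). Thus it suffices to prove that \textbf{(S4)} and \textbf{(S7)} together force the sequence (\ref{split}) to split, equivalently that $\St_r \otimes (T(\hat\lambda)/L(\lambda))$ is a tilting module. The first step is to record what \textbf{(S4)} gives us directly: since $\St_r \otimes L(\lambda)$ is tilting for every $\lambda \in X_r(T)$, and since $T(\hat\lambda)$ is tilting, the module $\St_r \otimes T(\hat\lambda)$ is tilting (Steinberg is itself tilting, being $\nabla((p^r-1)\rho) = \Delta((p^r-1)\rho) = L((p^r-1)\rho)$, and a tensor product of tilting modules is tilting). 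So in (\ref{split}) the middle term is tilting and the left term is tilting; the quotient $Q := \St_r \otimes (T(\hat\lambda)/L(\lambda))$ is therefore the quotient of a tilting module by a tilting submodule, hence has a good filtration. The content is to show $Q$ also has a Weyl filtration, equivalently $Q^*$ has a good filtration, equivalently the sequence splits.

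The second step is to bring in \textbf{(S7)}. The point of introducing $\hat\lambda = 2(p^r-1)\rho + w_0\lambda$ is that $T(\hat\lambda)/L(\lambda)$ should be analyzed via the good $(p,r)$-filtration of $\nabla(\hat\lambda)$. I would first identify $\nabla(\hat\lambda)$ inside $T(\hat\lambda)$ as the $\nabla$-factor with the highest weight, so that there is a chain $L(\lambda) \hookrightarrow \nabla(\hat\lambda) \hookrightarrow T(\hat\lambda)$ — here I use the standard fact (cited in the excerpt) that $L(\lambda)$ sits inside $T(\hat\lambda)$ with multiplicity one, combined with the general tilting-module structure theory (e.g.\ \cite[II.E]{J} or Donkin's work) which locates $\nabla(\hat\lambda)$ as the socle-adjacent $\nabla$-layer. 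Tensoring the chain $0 \to L(\lambda) \to \nabla(\hat\lambda) \to \nabla(\hat\lambda)/L(\lambda) \to 0$ with $\St_r$ and using \textbf{(S7)}: since $\nabla(\hat\lambda)$ has a good $(p,r)$-filtration, by \textbf{(S3)} — which holds because \textbf{(S4)} is assumed and Andersen's equivalence \textbf{(S3)} $\Leftrightarrow$ \textbf{(S4)} is available — $\St_r \otimes \nabla(\hat\lambda)$ has a good filtration; combined with $\St_r \otimes L(\lambda)$ tilting, one extracts that $\St_r \otimes (\nabla(\hat\lambda)/L(\lambda))$ has a good filtration, and dually (the factors $\nabla^{(p,r)}$ are $\tau$-self-dual up to the appropriate twist) a Weyl filtration, so this smaller quotient is already tilting.

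The third step is to propagate this from $\nabla(\hat\lambda)$ up to all of $T(\hat\lambda)$. Consider the short exact sequence $0 \to \nabla(\hat\lambda) \to T(\hat\lambda) \to T(\hat\lambda)/\nabla(\hat\lambda) \to 0$, where the cokernel has a good filtration by $\nabla(\mu)$ with $\mu < \hat\lambda$, each of which (being a quotient of highest weight below $\hat\lambda$ in a tilting module) has a good $(p,r)$-filtration — and this is where \textbf{(S7)} really needs to be invoked for the collection of weights arising, or more precisely where I must check that all the relevant $\nabla(\mu)$ have good $(p,r)$-filtrations. I expect the main obstacle to be exactly this bookkeeping: verifying that every induced factor of $T(\hat\lambda)$ above the copy of $L(\lambda)$ is covered by the hypotheses, and then assembling the pieces so that $\St_r \otimes T(\hat\lambda)/L(\lambda)$ acquires both a good and a Weyl filtration. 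Once $Q = \St_r \otimes (T(\hat\lambda)/L(\lambda))$ is shown to be tilting, \textbf{(S6)} holds, and part (a) delivers \textbf{(S1)}. The delicate point throughout is handling the duality: one must be careful that $\tau$-invariance of $\St_r \otimes L(\lambda)$ (as used already in the excerpt) and the self-duality properties of the $(p,r)$-pieces are applied to exactly the right modules, since $\hat\lambda$ was defined precisely to make these duality statements line up.
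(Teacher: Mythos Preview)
Your strategy of deducing \textbf{(S6)} directly from \textbf{(S4)}+\textbf{(S7)} and then invoking part (a) has a genuine gap: it misses the inductive structure that the argument actually requires. Two concrete problems. First, $\nabla(\hat\lambda)$ is the \emph{top} factor of a good filtration of $T(\hat\lambda)$, hence a quotient, not a submodule; the chain $L(\lambda)\hookrightarrow \nabla(\hat\lambda)\hookrightarrow T(\hat\lambda)$ you write down does not exist. Second, and more seriously, (S7) tells you $\nabla(\hat\lambda)$ has a good $(p,r)$-filtration, but it does \emph{not} tell you that $L(\lambda)$ is one of the sections, let alone the top one. A priori the $G_r$-head of $\nabla(\hat\lambda)$ could contain several simples. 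Your step 3 then compounds this: the remaining $\nabla(\mu)$ occurring in $T(\hat\lambda)$ have $\mu<\hat\lambda$ arbitrary, not of the form $\hat\sigma$, so (S7) says nothing about them. The $\tau$-duality remark does not repair this, since $T(\hat\lambda)/L(\lambda)$ is not itself $\tau$-invariant.

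The paper's proof handles exactly these issues by \emph{downward induction} on $\le_{\mathbb Q}$. One assumes $T(\hat\mu)$ is $G_r$-indecomposable for all $\mu>_{\mathbb Q}\lambda$; this hypothesis is what forces $\nabla(\hat\mu)$ (as a quotient of $T(\hat\mu)\cong Q_r(\mu)$) to have simple $G_r$-head $L(\mu)$, so that $L(\mu)$ really is the final quotient in any good $(p,r)$-filtration of $\nabla(\hat\mu)$, and $\mathrm{rad}_{G_r}\nabla(\hat\mu)$ inherits a good $(p,r)$-filtration. From this and \textbf{(S4)} one gets that $\St_r\otimes(T(\hat\mu)/L(\mu))$ is tilting \emph{only for those $\mu>_{\mathbb Q}\lambda$}, which is precisely what Lemma~\ref{assumeindecomposable} needs to rule out $Q_r(\mu)$ as a $G_r$-summand of $T(\hat\lambda)$. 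One never establishes (S6) for $\lambda$ itself en route; the $G_r$-indecomposability of $T(\hat\lambda)$ comes directly from Proposition~\ref{highestweightmodule} once all $Q_r(\mu)$ with $\mu>_{\mathbb Q}\lambda$ are excluded.
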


Since \textup{\textbf{(S4)}} is equivalent to \textup{\textbf{(S3)}}, and \textup{\textbf{(S7)}} is a special case of \textup{\textbf{(S2)}}, we obtain as a corollary:

\begin{cor}
Donkin's Good $(p,r)$-Filtration Conjecture implies Donkin's Tilting Module Conjecture.
\end{cor}

\subsection{Acknowledgments} The author thanks Christopher Bendel, Stephen Donkin, Daniel Nakano, and Cornelius Pillen for helpful discussions and  correspondences.  This work was partially supported by the Research Training Grant, DMS-1344994, from the NSF.

\section{Preliminaries}

\subsection{}
All notation not introduced in this paper will follow the notation found in \cite{J}.  For every $\lambda \in X_r(T)$, set
$$\lambda^0 := (p^r-1)\rho + w_0\lambda.$$
Note that $\lambda^0 \in X_r(T)$ also, and comparing with earlier notation, 
$$\hat{\lambda} = (p^r-1)\rho + \lambda^0.$$

Let $Q_r(\lambda)$ be the $G_r$-projective cover of $L(\lambda)$.  It has a unique $W$-invariant lift to $G_rT$, denoted by $\widehat{Q}_r(\lambda)$, and this module is known to have highest weight $\hat{\lambda}$.

Fix a Borel subgroup $B$ containing $T$.  The negative roots will correspond to those root subspaces in $B$.  Denote by $\Pi$ the set of simple positive roots.

There is an antiautomorphism $\tau:G \rightarrow G$ that is the identity on $T$, and swaps the positive and negative root subgroups.  For a finite dimensional $G$-module $M$, we obtain the module ${^{\tau}M}$, which is $M^*$ as a vector space, with action $g.f(m)=f(\tau(g).m)$.  This defines a character-preserving anti-equivalence from the category of finite dimensional $G$-modules to itself, sending $M$ to $^{\tau}M$ (cf. \cite[II.2.12]{J}).  Simple modules and finite dimensional tilting modules are two classes of modules for which $M \cong {^{\tau}M}$, while $\tau$ takes modules with good filtrations to modules with Weyl filtrations, and vice versa.

\subsection{}

We will frequently use this next result, which is essentially \cite[Lemma E.9]{J}.

\begin{lemma}\label{filtrationtwists}
Let $\lambda \in X(T)_+$.  If $M$ has a good filtration (resp. Weyl filtration), then $T((p^r-1)\rho + \lambda) \otimes M^{(r)}$ has a good filtration (resp. Weyl filtration).
\end{lemma}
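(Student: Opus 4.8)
The plan is to reduce to the case $M=\nabla(\mu)$ (respectively $M=\Delta(\mu)$) and then to keep the Frobenius-twisted factor paired with a copy of the Steinberg module. First I would record the auxiliary fact that $\textup{St}_r\otimes\nabla(\mu)^{(r)}\cong\nabla((p^r-1)\rho+p^r\mu)$ for every $\mu\in X(T)_+$: the two sides have the same formal character by a short computation with the Weyl character formula, and since the left-hand side is a tensor product of modules with good filtrations it has a good filtration, so it must coincide with the dual Weyl module of that highest weight. (This is well known; see \cite{J}.) Because $\textup{St}_r\otimes(-)^{(r)}$ is exact and the class of modules with a good filtration is closed under extensions, it follows that $\textup{St}_r\otimes M^{(r)}$ has a good filtration whenever $M$ does.

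Next I would use that $\textup{St}_r\otimes T(\lambda)$ is a tensor product of tilting modules, hence tilting, with highest weight $(p^r-1)\rho+\lambda$ occurring with multiplicity one; therefore $T((p^r-1)\rho+\lambda)$ is a direct summand of $\textup{St}_r\otimes T(\lambda)$. Tensoring with $M^{(r)}$ and rearranging factors, $T((p^r-1)\rho+\lambda)\otimes M^{(r)}$ is a direct summand of $\textup{St}_r\otimes T(\lambda)\otimes M^{(r)}\cong T(\lambda)\otimes(\textup{St}_r\otimes M^{(r)})$. The latter is a tensor product of two modules with good filtrations, hence has a good filtration, and a direct summand of such a module again has one; this settles the good-filtration case.

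For the Weyl-filtration case I would apply the anti-automorphism $\tau$. If $M$ has a Weyl filtration then ${^{\tau}M}$ has a good filtration, so by the previous step $T((p^r-1)\rho+\lambda)\otimes({^{\tau}M})^{(r)}$ has a good filtration; applying $\tau$ once more and using ${^{\tau}T(\nu)}\cong T(\nu)$, the compatibility of $\tau$ with Frobenius twists and with tensor products, and ${^{\tau}}({^{\tau}N})\cong N$, one concludes that $T((p^r-1)\rho+\lambda)\otimes M^{(r)}$ has a Weyl filtration.

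The only non-formal inputs here are the isomorphism $\textup{St}_r\otimes\nabla(\mu)^{(r)}\cong\nabla((p^r-1)\rho+p^r\mu)$ and the theorem that tensor products (and direct summands) of modules with good filtrations again have good filtrations; everything else is bookkeeping with exact functors and highest weights. Accordingly I expect the only genuine subtlety to be making sure the ``bad'' twisted factor $\nabla(\mu)^{(r)}$ — which by itself need not have a good filtration — is always kept together with a copy of $\textup{St}_r$, since it is precisely that pairing which produces an honest good filtration.
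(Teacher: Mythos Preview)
Your argument is correct and is essentially the paper's own proof: both realize $T((p^r-1)\rho+\lambda)$ as a summand of $\textup{St}_r\otimes T(\lambda)$ and then use that $\textup{St}_r\otimes M^{(r)}$ has a good filtration (the paper simply cites \cite[II.3.19]{J} for the latter, while you spell it out via $\textup{St}_r\otimes\nabla(\mu)^{(r)}\cong\nabla((p^r-1)\rho+p^r\mu)$). Your use of $\tau$ for the Weyl-filtration case is a perfectly good way to make precise the paper's ``similar'' remark.
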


\begin{proof}
Suppose that $M$ has a good filtration.  We have $T((p^r-1)\rho + \lambda)$ as a summand of the tilting module $\textup{St}_r \otimes T(\lambda)$, therefore $T((p^r-1)\rho+\lambda) \otimes M^{(r)}$ is a summand of $\textup{St}_r \otimes T(\lambda) \otimes M^{(r)}$.  Since $\textup{St}_r \otimes M^{(r)}$ has a good filtration by \cite[II.3.19]{J}, the result follows.  The proof for Weyl filtrations is similar.
\end{proof}

\section{Variations on \textbf{(S3)} and \textbf{(S4)}}

A key equivalence established in \cite[Theorem 9.2.3]{KN} is that $\textup{St}_r \otimes M$ has a good filtration if and only if $\text{Hom}_{G_r}(T(\hat{\lambda}),M)^{(-r)}$ has a good filtration for every $\lambda \in X_r(T)$.  In this section we look at other ways to formulate these conditions, as well as providing a few preliminary consequences.

\subsection{}

For later use, we want to prove that $L(\mu)$, $\textup{Hom}_{G_r}(T(\hat{\lambda}),L(\mu))^{(-r)}$ has a good filtration for every $\lambda \in X_r(T)$ if and only if it is tilting.  We will do this by establishing the following general facts.

\begin{lemma}\label{untwisted}
For any finite dimensional $G$-module $M$, the following are equivalent.
\begin{enumerate}[label=(\alph*)]
\item $\textup{Hom}_{G_r}(T(\hat{\lambda}),M)^{(-r)}$ has a good filtration for every $\lambda \in X_r(T)$.
\item $\textup{Hom}_{G_r}(M^*,T(\hat{\lambda}))^{(-r)}$ has a good filtration for every $\lambda \in X_r(T)$.
\item $\textup{Hom}_{G_r}(X,M)^{(-r)}$ has a good filtration for every tilting module $X$ that is projective over $G_r$.
\item $\textup{Hom}_{G_r}(M^*,X)^{(-r)}$ has a good filtration for every tilting module $X$ that is projective over $G_r$.
\end{enumerate}
\end{lemma}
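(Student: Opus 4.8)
The plan is to prove the chain of implications $(c)\Rightarrow(a)$, $(a)\Rightarrow(c)$, $(d)\Rightarrow(b)$, $(b)\Rightarrow(d)$, and $(a)\Leftrightarrow(b)$, exploiting two standard dualities: the anti-automorphism $\tau$ (so that $T(\hat\lambda)\cong{}^\tau T(\hat\lambda)$ since tilting modules are $\tau$-self-dual) and ordinary vector-space duality $\mathrm{Hom}_{G_r}(A,B)\cong\mathrm{Hom}_{G_r}(B^*,A^*)$ as $G$-modules (here the outer $G$-action is twisted by the $r$-th Frobenius, which is why the $(-r)$ shift appears). First I would observe that $(c)\Rightarrow(a)$ and $(d)\Rightarrow(b)$ are immediate, because $T(\hat\lambda)$ is itself a tilting module that is projective over $G_r$ — indeed $\hat\lambda=(p^r-1)\rho+\lambda^0$ with $\lambda^0\in X_r(T)$, and $T((p^r-1)\rho+\mu)$ for $\mu$ restricted is the $W$-invariant lift $\widehat Q_r(\mu)$ of a $G_r$-projective cover, hence $G_r$-projective. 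So the content is the reverse implications together with the duality link between the left-module and right-module versions.

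For $(a)\Rightarrow(c)$: let $X$ be any tilting module that is projective over $G_r$. Every such $X$ decomposes as a direct sum of indecomposable tilting modules $T(\nu)$, and each indecomposable summand that is $G_r$-projective must have highest weight of the form $(p^r-1)\rho+\mu$ with $\mu\in X(T)_+$ — more sharply, by Lemma~\ref{filtrationtwists}-type reasoning one reduces to the case $X=T((p^r-1)\rho+\mu)$ and then writes this as a summand of $\mathrm{St}_r\otimes T(\mu)$, while $\mathrm{St}_r$ itself is a summand of $T(\hat\lambda)$ restricted appropriately... actually the cleanest route is: $\mathrm{Hom}_{G_r}(X,M)$ is a $G$-direct summand of $\mathrm{Hom}_{G_r}(\mathrm{St}_r\otimes T(\mu),M)\cong\mathrm{Hom}_{G_r}(\mathrm{St}_r,M\otimes T(\mu)^*)$, and since $T(\hat\lambda)$ for $\lambda$ running over $X_r(T)$ exhausts (up to summands) all the $G_r$-projective tilting modules containing $\mathrm{St}_r$-isotypic pieces, one expresses $\mathrm{Hom}_{G_r}(X,M)^{(-r)}$ as a summand of a module of the form $\mathrm{Hom}_{G_r}(T(\hat\lambda),M\otimes N)^{(-r)}$ for suitable $N$; combined with Lemma~\ref{filtrationtwists} and the good-filtration closure under summands this gives (c). The analogous argument with arrows reversed handles $(b)\Rightarrow(d)$.

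For $(a)\Leftrightarrow(b)$: here I apply the two dualities. We have $\mathrm{Hom}_{G_r}(M^*,T(\hat\lambda))\cong\mathrm{Hom}_{G_r}(T(\hat\lambda)^*,M)\cong\mathrm{Hom}_{G_r}(T(\hat\lambda),M)$ as $G$-modules up to a Frobenius twist, the last step because $T(\hat\lambda)$ is $\tau$-self-dual and ${}^\tau(\text{--})$ intertwines $\mathrm{Hom}_{G_r}$ in the two slots. One must be a little careful that $\lambda\mapsto\hat\lambda$ (equivalently $\lambda\mapsto\lambda^0=(p^r-1)\rho+w_0\lambda$) is an involution on $X_r(T)$, so that quantifying over all $\lambda\in X_r(T)$ in (a) is the same as quantifying over all $\lambda$ in (b); this is where $w_0$ and the shift by $(p^r-1)\rho$ must be tracked precisely.

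The main obstacle I anticipate is bookkeeping the Frobenius twists and the $\tau$-duality together: getting the direction of the $(-r)$ shift right, confirming that ${}^\tau(\text{--})$ applied to $\mathrm{Hom}_{G_r}(A,B)$ really produces $\mathrm{Hom}_{G_r}({}^\tau B,{}^\tau A)$ with the outer $G$-structure one expects, and verifying that "tilting and $G_r$-projective" summands are controlled exactly by the family $\{T(\hat\lambda):\lambda\in X_r(T)\}$ (this last point is where \cite[Theorem 9.2.3]{KN} and the structure of $\widehat Q_r(\lambda)$ are doing the real work). Once those identifications are pinned down, each individual implication is short.
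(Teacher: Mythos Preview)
Your overall architecture is reasonable, but there are two genuine gaps.

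First, in $(a)\Leftrightarrow(b)$ you conflate $\tau$-duality with ordinary duality. The isomorphism $\mathrm{Hom}_{G_r}(A,B)\cong\mathrm{Hom}_{G_r}(B^*,A^*)$ gives $\mathrm{Hom}_{G_r}(M^*,T(\hat\lambda))\cong\mathrm{Hom}_{G_r}(T(\hat\lambda)^*,M)$, and $T(\hat\lambda)^*\cong T(-w_0\hat\lambda)$, which is \emph{not} $T(\hat\lambda)$ in general. The $\tau$-self-duality of tilting modules says ${}^\tau T(\hat\lambda)\cong T(\hat\lambda)$, a different statement that does not give you $T(\hat\lambda)^*\cong T(\hat\lambda)$. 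What actually saves the equivalence is that $-w_0\hat\lambda=\widehat{(-w_0\lambda)}$ and $-w_0$ permutes $X_r(T)$, so the family $\{T(\hat\lambda):\lambda\in X_r(T)\}$ is closed under $*$; this is exactly what the paper uses. Your remark about the involution $\lambda\mapsto\lambda^0$ is a red herring here. The paper then gets $(c)\Leftrightarrow(d)$ for free by the same duality, since the dual of a $G_r$-projective tilting module is again $G_r$-projective tilting.

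Second, and more seriously, your $(a)\Rightarrow(c)$ does not close. Having expressed $\mathrm{Hom}_{G_r}(X,M)^{(-r)}$ as a summand of something of the form $\mathrm{Hom}_{G_r}(T(\hat\lambda),M\otimes N)^{(-r)}$, you still need that this has a good filtration; but hypothesis (a) concerns $M$ in the second slot, not $M\otimes N$. Lemma~\ref{filtrationtwists} does not help: it is about $T((p^r-1)\rho+\lambda)\otimes(-)^{(r)}$, not about $\mathrm{Hom}_{G_r}$-spaces. The paper's route (arguing $(b)\Rightarrow(d)$, which is the same by duality) is to decompose on the other side: any $G_r$-projective tilting $X$ is a $G$-summand of a direct sum of modules $T(\widehat{\gamma_0})\otimes T(\gamma_1)^{(r)}$ with $\gamma_0\in X_r(T)$ and $\gamma_1\in X(T)_+$. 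The Frobenius-twisted factor then pulls \emph{out} of $\mathrm{Hom}_{G_r}$, yielding
\[
\mathrm{Hom}_{G_r}\bigl(M^*,T(\widehat{\gamma_0})\otimes T(\gamma_1)^{(r)}\bigr)^{(-r)}\cong \mathrm{Hom}_{G_r}\bigl(M^*,T(\widehat{\gamma_0})\bigr)^{(-r)}\otimes T(\gamma_1).
\]
Now (b) applies directly to the first tensor factor, and tensoring with the tilting module $T(\gamma_1)$ preserves good filtrations. The essential idea you are missing is to keep the twisted piece on the $X$ side so it factors out, rather than pushing an untwisted tilting factor over to $M$ where the hypothesis no longer applies.
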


\begin{proof}
For any two finite dimensional $G$-modules $A,B$ there is an isomorphism of $G$-modules
$$\text{Hom}_{G_r}(A,B) \cong \text{Hom}_{G_r}(B^*,A^*).$$
Noting that $T(\hat{\lambda})^* \cong T(-w_0\hat{\lambda})$, and that if $X$ is tilting and projective over $G_r$, then $X^*$ is also, we see that (a) $\iff$ (b) and (c) $\iff$ (d).

It is clear that (d) implies (b), since every $T(\hat{\lambda})$ is projective over $G_r$.  Conversely, suppose that (b) holds.  If $X$ is projective over $G_r$ and tilting, then it is isomorphic to a direct summand of a tilting module of the form
$$\bigoplus_{\gamma \in \Gamma} (T(\hat{\gamma_0}) \otimes T(\gamma_1)^{(r)}),$$
where $\Gamma \subseteq X(T)_+$, and $\gamma = \gamma_0 + p^r\gamma_1$ with $\gamma_0 \in X_r(T)$ and $\gamma_1 \in X(T)_+$.

We then have that $\text{Hom}_{G_r}(M^*,X)^{(-r)}$ is a summand of
$$\textup{Hom}_{G_r}\left(M^*,\bigoplus_{\gamma \in \Gamma} (T(\hat{\gamma_0}) \otimes T(\gamma_1)^{(r)})\right)^{(-r)} \cong \bigoplus_{\gamma \in \Gamma} \left( \text{Hom}_{G_r}(M^*,T(\hat{\gamma_0}))^{(-r)} \otimes T(\gamma_1) \right).$$
Since this module has a good filtration, we conclude that $\text{Hom}_{G_r}(M^*,X)^{(-r)}$ does also.  Consequently, (b) $\Rightarrow$ (d).
\end{proof}

\begin{lemma}
Keep the assumptions on $M$ as in Lemma \ref{untwisted}.  If ${^{\tau}M} \cong M$, then $\textup{Hom}_{G_r}(X,M)^{(-r)}$ and $\textup{Hom}_{G_r}(M,X)^{(-r)}$ are tilting modules for every tilting module $X$ that is projective over $G_r$.
\end{lemma}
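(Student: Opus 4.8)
The plan is to combine Lemma \ref{untwisted} with the self-duality hypothesis ${^{\tau}M}\cong M$ and the characterization of tilting modules via good filtrations of both the module and its dual. First I would observe that a finite-dimensional $G$-module $N$ is tilting precisely when both $N$ and ${^{\tau}N}$ have good filtrations; equivalently, when $N$ and $N^*$ both have good filtrations. So for each of $\textup{Hom}_{G_r}(X,M)^{(-r)}$ and $\textup{Hom}_{G_r}(M,X)^{(-r)}$ it suffices to show that the module and its $\tau$-twist both have good filtrations. The good-filtration half will come directly from Lemma \ref{untwisted}, once we know $M$ satisfies its equivalent conditions; the other half is where the hypothesis ${^{\tau}M}\cong M$ enters.

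The first step is to note that the hypotheses of the lemma are inherited: we are keeping the assumptions on $M$ from Lemma \ref{untwisted}, so conditions (a)--(d) there all hold for $M$. In particular, for every tilting module $X$ that is projective over $G_r$, $\textup{Hom}_{G_r}(X,M)^{(-r)}$ has a good filtration by (c), and $\textup{Hom}_{G_r}(M^*,X)^{(-r)}$ has a good filtration by (d). Now since ${^{\tau}M}\cong M$ and $\tau$ is a character-preserving anti-equivalence, one checks that $M^*\cong {^{\tau}M}\cong M$ as $G$-modules; hence $\textup{Hom}_{G_r}(M,X)^{(-r)} \cong \textup{Hom}_{G_r}(M^*,X)^{(-r)}$ also has a good filtration. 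So both $\textup{Hom}_{G_r}(X,M)^{(-r)}$ and $\textup{Hom}_{G_r}(M,X)^{(-r)}$ have good filtrations.

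It remains to produce good filtrations on the $\tau$-twists, or equivalently on the $G$-duals. The key identity is $\textup{Hom}_{G_r}(A,B)^* \cong \textup{Hom}_{G_r}(B,A)$ for finite-dimensional $G$-modules (using that $G_r$-cohomology/Hom behaves well with duals and that $B$ is finite-dimensional), which after a Frobenius untwist gives $\big(\textup{Hom}_{G_r}(X,M)^{(-r)}\big)^* \cong \textup{Hom}_{G_r}(M,X)^{(-r)}$ and $\big(\textup{Hom}_{G_r}(M,X)^{(-r)}\big)^* \cong \textup{Hom}_{G_r}(X,M)^{(-r)}$. Combined with the previous paragraph — where we showed both of these Hom-modules have good filtrations — this shows each of the two modules has the property that it and its dual both have good filtrations, i.e. each is tilting. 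The main obstacle I anticipate is pinning down the dual isomorphism $\textup{Hom}_{G_r}(A,B)^* \cong \textup{Hom}_{G_r}(B,A)$ with the correct $G$-equivariance and the correct Frobenius twist bookkeeping: one must be careful that the $(-r)$-twist is compatible on both sides and that the isomorphism is $G$-linear (not merely $G_r$-linear), for which invoking $X$ being $G_r$-projective — so that $\textup{Hom}_{G_r}(-,X)$ and $\textup{Hom}_{G_r}(X,-)$ are exact and these Hom spaces carry their natural $G$-module structures — will be essential. Once that identity is in hand, the argument closes immediately.
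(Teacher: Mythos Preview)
Your argument has a genuine gap: the assertion that ${^{\tau}M}\cong M$ implies $M^*\cong M$ is false. The $\tau$-dual is the \emph{contravariant} (Chevalley) dual, defined so that it preserves characters; by contrast the ordinary dual $M^*$ sends a highest-weight module of weight $\lambda$ to one of weight $-w_0\lambda$. In particular every simple module satisfies ${^{\tau}L(\lambda)}\cong L(\lambda)$, but $L(\lambda)^*\cong L(-w_0\lambda)$, which is not $L(\lambda)$ unless $\lambda=-w_0\lambda$. Since the principal application of this lemma (the corollary that follows) is precisely $M=L(\mu)$, your deduction $\textup{Hom}_{G_r}(M,X)\cong \textup{Hom}_{G_r}(M^*,X)$ collapses in exactly the cases of interest.

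The second identity you rely on, $\textup{Hom}_{G_r}(A,B)^*\cong \textup{Hom}_{G_r}(B,A)$ as $G$-modules, is also not the standard one and is not what the paper (or Lemma~\ref{untwisted}) provides; what is available is $\textup{Hom}_{G_r}(A,B)\cong \textup{Hom}_{G_r}(B^*,A^*)$. Turning invariants into coinvariants to get the dual you want would require an additional $G$-equivariant identification that, even when $X$ is $G_r$-projective, does not come for free. The paper sidesteps both issues entirely: it realizes $\textup{St}_r\otimes \textup{Hom}_{G_r}(\textup{St}_r,T(\lambda^0)^*\otimes M)$ as a $G$-summand of the $\tau$-invariant module $T(\lambda^0)^*\otimes M$, concludes that this summand (hence the Hom space itself) is $\tau$-invariant, and then uses the good filtration already in hand to get tilting. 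The crucial maneuver is producing $\tau$-invariance indirectly via a summand of a visibly $\tau$-invariant tensor product, rather than by any dual-of-Hom formula.
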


\begin{proof}
We will give the proof for $\text{Hom}_{G_r}(T(\hat{\lambda}),M)^{(-r)}$, from which the result can easily be generalized by similar arguments to those used above.  Suppose that any (hence all) of the equivalent conditions in Lemma \ref{untwisted} are satisfied.  Because $T(\hat{\lambda})$ is a $G$-summand of $\textup{St}_r \otimes T(\lambda^0)$, we have that $\text{Hom}_{G_r}(T(\hat{\lambda}),M)^{(-r)}$ is a $G$-summand of $\text{Hom}_{G_r}(\textup{St}_r \otimes T(\lambda^0),M)^{(-r)}$, which also has a good filtration.  There is a $G$-isomorphism
$$\text{Hom}_{G_r}(\textup{St}_r \otimes T(\lambda^0),M) \cong \text{Hom}_{G_r}(\textup{St}_r, T(\lambda^0)^* \otimes M).$$
Further, $\textup{St}_r \otimes \text{Hom}_{G_r}(\textup{St}_r, T(\lambda^0)^* \otimes M)$ is a $G$-summand of $T(\lambda^0)^* \otimes M$ \cite[II.10.4(a)]{J}.  Since $T(\lambda^0)^* \otimes M$ is $\tau$-invariant, this summand must be also (since $\textup{St}_r$ is $\tau$-invariant over both $G$ and $G_r$), hence $\text{Hom}_{G_r}(\textup{St}_r, T(\lambda^0)^* \otimes M)$ and $\text{Hom}_{G_r}(\textup{St}_r \otimes T(\lambda^0),M)$ are $\tau$-invariant.  It follows that $\text{Hom}_{G_r}(\textup{St}_r \otimes T(\lambda^0),M)^{(-r)}$ is a tilting module, thus that its summand $\text{Hom}_{G_r}(T(\hat{\lambda}),M)^{(-r)}$ is tilting.
\end{proof}

\begin{cor}
Statement \textbf{\textup{(S4)}} holds if and only if $\textup{Hom}_{G_r}(\textup{St}_r,L(\lambda) \otimes L(\mu))^{(-r)}$ is a tilting module for every $\lambda,\mu \in X_r(T)$.
\end{cor}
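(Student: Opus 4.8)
The plan is to feed $M=L(\mu)$ into \cite[Theorem 9.2.3]{KN} together with the two preceding lemmas, obtain a reformulation of \textbf{(S4)} as a tilting property of $\operatorname{Hom}_{G_r}(T(\hat\lambda),L(\mu))^{(-r)}$, and then trade $T(\hat\lambda)$ for $\textup{St}_r$ using that $T(\hat\lambda)$ is a $G$-summand of $\textup{St}_r\otimes T(\lambda^0)$. In detail: since $\textup{St}_r\otimes L(\mu)$ is $\tau$-invariant, it is tilting if and only if it has a good filtration, so \textbf{(S4)} is equivalent to $\textup{St}_r\otimes L(\mu)$ having a good filtration for every $\mu\in X_r(T)$. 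By \cite[Theorem 9.2.3]{KN} this holds exactly when condition~(a) of Lemma~\ref{untwisted} holds for each $M=L(\mu)$; since $L(\mu)$ is $\tau$-invariant, the lemma immediately preceding this corollary makes condition~(a) equivalent to $\operatorname{Hom}_{G_r}(X,L(\mu))^{(-r)}$ being tilting for every $G_r$-projective tilting module $X$, and by Lemma~\ref{untwisted} one may restrict to $X=T(\hat\lambda)$. Thus \textbf{(S4)} is equivalent to the assertion that $\operatorname{Hom}_{G_r}(T(\hat\lambda),L(\mu))^{(-r)}$ is tilting for all $\lambda,\mu\in X_r(T)$, and it remains to compare this with the condition in the statement.

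\emph{Forward direction.} Assume \textbf{(S4)}. Then $X:=\textup{St}_r\otimes L(\lambda)$ is $G_r$-projective and $\tau$-invariant, and is tilting by \textbf{(S4)}, so the reformulation above shows $\operatorname{Hom}_{G_r}(\textup{St}_r\otimes L(\lambda),L(\mu))^{(-r)}$ is tilting for all $\lambda,\mu\in X_r(T)$. The tensor-hom adjunction over $G_r$ gives a $G$-isomorphism $\operatorname{Hom}_{G_r}(\textup{St}_r\otimes L(\lambda),L(\mu))\cong\operatorname{Hom}_{G_r}(\textup{St}_r,L(\lambda)^*\otimes L(\mu))$, and since $L(\lambda)^*=L(-w_0\lambda)$ with $-w_0\lambda$ again running over $X_r(T)$, this yields that $\operatorname{Hom}_{G_r}(\textup{St}_r,L(\lambda)\otimes L(\mu))^{(-r)}$ is tilting for all $\lambda,\mu\in X_r(T)$.

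\emph{Reverse direction.} Assume $\operatorname{Hom}_{G_r}(\textup{St}_r,L(\lambda)\otimes L(\mu))^{(-r)}$ is tilting for all $\lambda,\mu\in X_r(T)$. Because $T(\hat\lambda)$ is a $G$-summand of $\textup{St}_r\otimes T(\lambda^0)$ with $\lambda^0\in X_r(T)$, the module $\operatorname{Hom}_{G_r}(T(\hat\lambda),L(\mu))^{(-r)}$ is a $G$-summand of $\operatorname{Hom}_{G_r}(\textup{St}_r\otimes T(\lambda^0),L(\mu))^{(-r)}\cong\operatorname{Hom}_{G_r}(\textup{St}_r,T(\lambda^0)^*\otimes L(\mu))^{(-r)}$, so by the reformulation above it suffices to prove that this last module has a good filtration. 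I would do this by filtering $T(\lambda^0)^*\otimes L(\mu)$ along a Weyl filtration of the tilting module $T(\lambda^0)^*=T((p^r-1)\rho-\lambda)$, applying the exact functor $\operatorname{Hom}_{G_r}(\textup{St}_r,-)^{(-r)}$, and analysing the graded pieces $\operatorname{Hom}_{G_r}(\textup{St}_r,\Delta(\sigma)\otimes L(\mu))^{(-r)}$ by an induction that repeatedly strips off Frobenius twists via the identity $\operatorname{Hom}_{G_r}(\textup{St}_r,N\otimes V^{(r)})\cong\operatorname{Hom}_{G_r}(\textup{St}_r,N)\otimes V^{(r)}$, so as to land on the hypothesis for pairs of $p^r$-restricted weights.

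The genuinely delicate point is exactly this induction. A Weyl filtration of a $p^r$-restricted tilting module may involve weights $\sigma$ that are not themselves $p^r$-restricted, and for $\tau=\tau_0+p^r\tau_1$ with $\tau_1\neq0$ one only obtains $\operatorname{Hom}_{G_r}(\textup{St}_r,L(\tau)\otimes L(\mu))^{(-r)}\cong\operatorname{Hom}_{G_r}(\textup{St}_r,L(\tau_0)\otimes L(\mu))^{(-r)}\otimes L(\tau_1)$, a tilting module tensored with $L(\tau_1)$, which in general has no good filtration. So the induction has to be organised --- ordering by the dominance order and grouping subquotients with care --- so that each non-restricted contribution is absorbed into an honest $(r)$-twist of a module with a good filtration before any such obstruction can surface; carrying out this bookkeeping is the crux of the argument.
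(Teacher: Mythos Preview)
Your forward direction is correct and matches the paper's argument.

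The reverse direction, however, has a genuine gap. You replace $T(\hat\lambda)$ by the summand of $\textup{St}_r\otimes T(\lambda^0)$ and then propose to filter $T(\lambda^0)^*$ by a Weyl filtration, but you never carry out the induction, and the obstacle you flag is real: the subquotients you produce are of the form $\operatorname{Hom}_{G_r}(\textup{St}_r,\Delta(\sigma)\otimes L(\mu))^{(-r)}$, not $\operatorname{Hom}_{G_r}(\textup{St}_r,L(\sigma)\otimes L(\mu))^{(-r)}$, so even before the non-restricted weights appear you are not at the hypothesis. Passing to a composition series of $\Delta(\sigma)$ (or of $T(\lambda^0)^*$ directly) gives factors $L(\tau_0)\otimes L(\tau_1)^{(r)}$, yielding $\operatorname{Hom}_{G_r}(\textup{St}_r,L(\tau_0)\otimes L(\mu))^{(-r)}\otimes L(\tau_1)$; this is a tilting module tensored with a simple module, which in general has no good filtration. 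Your suggestion of ``grouping subquotients'' so that such pieces reassemble into twists of modules with good filtrations is exactly the content of a good $(p,r)$-filtration statement for $T(\lambda^0)^*\otimes L(\mu)$ --- i.e.\ essentially \textbf{(S2)}/\textbf{(S5)}-type information that you do not have available.

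The paper avoids all of this with one stroke: instead of using that $T(\hat\gamma)$ is a summand of $\textup{St}_r\otimes T(\gamma^0)$, it uses Pillen's result \cite[Corollary~A]{P} that $T(\hat\gamma)$ is already a $G$-summand of $\textup{St}_r\otimes L(\gamma^0)$. Then
\[
\operatorname{Hom}_{G_r}(T(\hat\gamma),L(\lambda))^{(-r)}\ \text{is a summand of}\ \operatorname{Hom}_{G_r}(\textup{St}_r\otimes L(\gamma^0),L(\lambda))^{(-r)}\cong \operatorname{Hom}_{G_r}(\textup{St}_r,L(-w_0\gamma^0)\otimes L(\lambda))^{(-r)},
\]
and since $-w_0\gamma^0\in X_r(T)$ this is tilting by hypothesis. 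That finishes the reverse direction with no filtration or induction at all. The missing idea in your argument is precisely this sharper summand statement.
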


\begin{proof}
If \textbf{(S4)} holds, then each $\textup{St}_r \otimes L(\mu)$ is tilting, and hence by the previous two results
$$\textup{Hom}_{G_r}(\textup{St}_r,L(\lambda) \otimes L(\mu))^{(-r)} \cong \textup{Hom}_{G_r}(\textup{St}_r \otimes L(\mu)^*, \otimes L(\lambda))^{(-r)}$$
is also tilting.

Conversely, suppose that $\textup{Hom}_{G_r}(\textup{St}_r,L(\lambda) \otimes L(\mu))^{(-r)}$ is tilting for each $\lambda,\mu \in X_r(T)$.  Each $T(\hat{\gamma})$ appears as a $G$-summand of a module of the form $\textup{St}_r \otimes L(\mu)^*$ (specifically, for $\mu = (p^r-1)\rho - \gamma$), so we have that
$$\textup{Hom}_{G_r}(T(\hat{\gamma}), L(\lambda))^{(-r)}$$
will be tilting as $\lambda,\gamma$ range over all pairs of elements in $X_r(T)$, hence $\textbf{(S4)}$ holds.
\end{proof}

\begin{remark}
A necessary condition for $\textup{St}_r \otimes L(\mu)$ to be tilting is that $\textup{Hom}_{G_r}(\textup{St}_r \otimes \textup{St}_r, \otimes L(\mu))^{(-r)}$ is tilting.  There is an isomorphism of $T$-modules
$$\textup{Hom}_{G_r}(\textup{St}_r \otimes \textup{St}_r, \otimes L(\mu)) \cong \textup{Hom}_{T_r}(k, \otimes L(\mu)) \cong L(\mu)^{T_r}.$$
So if \textbf{(S4)} holds, then $\text{ch}(L(\mu)^{T_r})$ is $p^r$-times the character of a tilting module for every $\mu \in X_r(T)$.
\end{remark}

\subsection{}

Andersen proved that if $M$ and $N$ are $G$-modules such that both $\textup{St}_r \otimes M$ and $\textup{St}_r \otimes N$ have a good filtration, then $\textup{St}_r \otimes M \otimes N$ has a good filtration \cite[Proposition 4.4]{A}.  We use similar arguments in establishing the next proposition.

\begin{prop}
Let $M$ be a finite dimensional $G$-module.
\begin{enumerate}[label=(\alph*)]
\item $\textup{St}_r \otimes M$ has a good filtration if and only if ${\textup{St}_r}^{\otimes n} \otimes M$ has a good filtration for some $n \ge 1$.
\item If $\textup{St}_r \otimes M \otimes L(\lambda)$ has a good filtration for some $\lambda \in X_r(T)$, then $\textup{St}_r \otimes M$ has a good filtration.
\end{enumerate}
\end{prop}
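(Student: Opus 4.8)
The plan is to reduce part (a) to part (b) and then to attack (b) by transporting everything to the Frobenius‑kernel level, where one can run the kind of tensoring argument used by Andersen.

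For part (a), the forward direction is immediate (take $n=1$). For the converse, suppose $\St_r^{\otimes n}\otimes M$ has a good filtration for some $n\ge 1$ and induct on $n$. The case $n=1$ is the hypothesis. For $n\ge 2$, write
\[
\St_r^{\otimes n}\otimes M \;=\; \St_r\otimes\bigl(\St_r^{\otimes(n-2)}\otimes M\bigr)\otimes L((p^r-1)\rho),
\]
observe that $(p^r-1)\rho\in X_r(T)$ and $L((p^r-1)\rho)=\St_r$, and apply part (b) with $M$ replaced by $\St_r^{\otimes(n-2)}\otimes M$ to conclude that $\St_r^{\otimes(n-1)}\otimes M$ has a good filtration; the induction hypothesis then finishes it. So it remains to prove (b).

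For part (b), combine \cite[Theorem 9.2.3]{KN} with Lemma~\ref{untwisted}: for any finite dimensional $G$-module $N$, "$\St_r\otimes N$ has a good filtration" is equivalent to "$\Hom_{G_r}(X,N)^{(-r)}$ has a good filtration for every tilting module $X$ projective over $G_r$". Applying this to $N=M\otimes L(\lambda)$ and using $\Hom_{G_r}(X,M\otimes L(\lambda))\cong\Hom_{G_r}(X\otimes L(\lambda)^*,M)$, the hypothesis says exactly that $\Hom_{G_r}(X\otimes L(\lambda)^*,M)^{(-r)}$ has a good filtration for every tilting $X$ projective over $G_r$. Since any such $X\otimes L(\lambda)^*$ is projective over $G_r$, the functor $\Hom_{G_r}(-,M)$ carries short exact sequences of modules projective over $G_r$ to short exact sequences and commutes with direct sums; hence it suffices to show: \emph{every tilting module $Y$ projective over $G_r$ admits a filtration by $G$-submodules whose successive quotients are $G$-direct summands of modules $X\otimes L(\lambda)^*$ with $X$ tilting and projective over $G_r$.} (Each such quotient being projective over $G_r$, the filtration of $Y$ splits over $G_r$, so $\Hom_{G_r}(Y,M)^{(-r)}$ acquires a $G$-filtration with quotients that are summands of modules $\Hom_{G_r}(X\otimes L(\lambda)^*,M)^{(-r)}$, each with a good filtration.)

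The input driving the last assertion is that $\St_r$ is a $G$-direct summand of $L(\lambda)^*\otimes L(\lambda^0)$: restricted to $G_r$ this module has $(p^r-1)\rho$ as its unique highest $G_r$-weight, with multiplicity one, so $\Hom_{G_r}(\St_r,\,L(\lambda)^*\otimes L(\lambda^0))$ is one-dimensional by Schur's Lemma (the Steinberg module being simple and projective over $G_r$), hence trivial as a $G$-module, and then \cite[II.10.4(a)]{J} exhibits $\St_r$ as a $G$-summand of $L(\lambda)^*\otimes L(\lambda^0)$. Now every tilting $Y$ projective over $G_r$ is a $G$-summand of $\St_r\otimes T$ for some tilting $T$ (from the proof of Lemma~\ref{untwisted}: $Y$ is a summand of a sum of modules $T(\widehat{\gamma_0})\otimes T(\gamma_1)^{(r)}$, and $T(\widehat{\gamma_0})$ is a summand of $\St_r\otimes T(\gamma_0^{\,0})$). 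Combining, $Y$ is a $G$-summand of $L(\lambda)^*\otimes\bigl(L(\lambda^0)\otimes T\bigr)$. One then "resolves" the non-tilting factor $L(\lambda^0)$: it occurs as a quotient of the tilting module $T(\widehat{\lambda^0})$ (which is projective over $G_r$, since $\widehat{\lambda^0}=(p^r-1)\rho+\lambda$), with kernel whose composition factors have strictly smaller highest weight, so $L(\lambda^0)\otimes T$ is the cokernel of a map $K\otimes T\to T(\widehat{\lambda^0})\otimes T$ with $T(\widehat{\lambda^0})\otimes T$ tilting projective over $G_r$ and $K$ built from simples of smaller highest weight; iterating and feeding the result through the exact-on-projectives functor $\Hom_{G_r}(-,M)$ as above should produce the required filtration of $\Hom_{G_r}(Y,M)^{(-r)}$.

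The main obstacle is making this resolution terminate and close up cleanly. One must track the composition factors that appear — in particular ensuring that the simples $L(\xi)$ peeled off stay $p^r$-restricted, so that no Frobenius-twisted factor $L(\xi_1)^{(r)}$ with $\xi_1\ne 0$ enters (on such a factor, tensoring by $L(\lambda)^*$ is not handled by the inductive step) — and one must verify that the bookkeeping of summands, kernels and cokernels really does realize $\Hom_{G_r}(Y,M)^{(-r)}$ as an iterated extension of modules with good filtrations. The reductions of the first three paragraphs are essentially formal; this combinatorial core is where the genuine work lies, and it is the step I expect to consume most of the proof.
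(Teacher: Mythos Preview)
Your reduction of (a) to (b) is valid, but the attack on (b) is both incomplete and far more elaborate than needed. You yourself flag the ``resolution'' of $L(\lambda^0)$ as the real obstacle, and indeed it is one: the kernel $K$ of $T(\widehat{\lambda^0})\twoheadrightarrow L(\lambda^0)$ need not have all composition factors $p^r$-restricted, so the induction you sketch does not obviously close up, and even if it did the bookkeeping required to extract a genuine filtration of $\Hom_{G_r}(Y,M)^{(-r)}$ by good-filtration modules is not carried out. As written, (b) is unproved.

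The paper's argument avoids all of this by reversing the dependency and keeping everything at the level of good filtrations. For (a) one observes directly that $\St_r$, being self-dual, is a $G$-summand of $\St_r^{\otimes 3}$; hence $\St_r\otimes M$ is a summand of $\St_r^{\otimes(2k+1)}\otimes M$ for every $k\ge 0$. Since tensoring a good-filtration module by $\St_r$ preserves good filtrations, $\St_r^{\otimes n}\otimes M$ having a good filtration forces $\St_r^{\otimes(n+i)}\otimes M$ to have one for all $i\ge 0$, and choosing $n+i$ odd gives the result. For (b) one simply tensors the hypothesis $\St_r\otimes M\otimes L(\lambda)$ with the \emph{tilting} module $T((p^r-1)\rho-\lambda)$: the result still has a good filtration, and since $\St_r$ is a $G$-summand of $L(\lambda)\otimes T((p^r-1)\rho-\lambda)$ (by exactly the $\Hom_{G_r}(\St_r,-)$ argument you gave for $L(\lambda)^*\otimes L(\lambda^0)$), one finds $\St_r^{\otimes 2}\otimes M$ as a summand. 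Part (a) then finishes.

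The moral: you already had the crucial splitting-off-$\St_r$ idea, but applied it to $L(\lambda)^*\otimes L(\lambda^0)$, where the second factor is simple and does not preserve good filtrations under tensoring---hence your need to ``resolve'' it. Replacing that simple module by the tilting module $T((p^r-1)\rho-\lambda)$ removes the obstruction entirely and makes the passage through $\Hom_{G_r}$ and \cite[Theorem 9.2.3]{KN} unnecessary.
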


\begin{proof}
(a) If ${\textup{St}_r}^{\otimes n} \otimes M$ has a good filtration, then ${\textup{St}_r}^{\otimes n+i} \otimes M$ does also for all $i \ge 1$.  Since $\text{St}_r$ is self-dual, it follows that $\textup{St}_r$ is a $G$-summand of $\textup{St}_r \otimes \textup{St}_r \otimes \textup{St}_r$ (see, for example, the end of the proof of Theorem 2.1 in \cite{CPS}).  From this last fact the result is easily deduced.

(b) Suppose that $\textup{St}_r \otimes M \otimes L(\lambda)$ has a good filtration, with $\lambda \in X_r(T)$.  Then
$$V = \textup{St}_r \otimes M \otimes L(\lambda) \otimes T((p^r-1)\rho - \lambda)$$
has a good filtration also.  But $\textup{St}_r$ is a summand of $L(\lambda) \otimes T((p^r-1)\rho - \lambda)$, so ${\textup{St}_r}^{\otimes 2} \otimes M$ is a summand of $V$ and therefore has a good filtration.  By (a) the result follows.

\end{proof}

\begin{remark}
Though $X_r(T)$ is not a minimal set of weights in $X(T)_+$ under the usual ordering (that is, there is generally some $\sigma < \lambda$ with $\lambda \in X_r(T)$ and $\sigma \not\in X_r(T)$), they are precisely the dominant weights $\lambda$ for which $(p^r-1)\rho - \lambda$ is also dominant.  We see the importance of this last fact highlighted in the ``cancellation'' property of $L(\lambda)$ in part (b) of the proposition.
\end{remark}

\section{Decomposing Tilting Modules}

In this section we look at how various tilting modules decompose over $G$ and $G_r$.  Since $\text{St}_r \otimes L(\lambda)$, where $\lambda \in X_r(T)$, is known to be tilting in many cases, these results immediately apply to such modules (see also \cite{K} for decompositions of modules of the form $\text{St}_r \otimes \nabla(\lambda)$).

\subsection{}

We begin by recalling the ``rational order'' on $X(T)$.

\begin{defn}
The order relation $\le_{\mathbb{Q}}$ on $X(T)$ is given by $\lambda \le_{\mathbb{Q}} \mu$ if
$$\mu - \lambda = \sum_{\alpha \in \Pi} q_{\alpha}\alpha, \quad q_{\alpha} \in \mathbb{Q}_{\ge 0}.$$
\end{defn}

It is clear that if $\lambda \le \mu$, then $\lambda \le_{\mathbb{Q}} \mu$.

\begin{lemma}
If $\lambda \in X(T)_+$, then $\lambda \ge_{\mathbb{Q}} 0$.
\end{lemma}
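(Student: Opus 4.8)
The plan is to argue via the standard $W$-invariant positive-definite symmetric bilinear form $(\,\cdot\,,\,\cdot\,)$ on $X(T)\otimes\mathbb{R}$, normalized so that $\langle\mu,\alpha^\vee\rangle = 2(\mu,\alpha)/(\alpha,\alpha)$ for every root $\alpha$. Since $G$ is simple, $\Pi$ is a basis of $X(T)\otimes\mathbb{Q}$, so $\lambda$ can be written uniquely as $\lambda=\sum_{\alpha\in\Pi}q_\alpha\alpha$ with $q_\alpha\in\mathbb{Q}$; the assertion to prove is exactly that every $q_\alpha$ is $\ge 0$.

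Suppose not, and partition $\Pi=P\sqcup N$ with $P=\{\alpha:q_\alpha\ge 0\}$ and $N=\{\alpha:q_\alpha<0\}\ne\emptyset$. Put $\mu=\sum_{\alpha\in P}q_\alpha\alpha$ and $\nu=\sum_{\alpha\in N}(-q_\alpha)\alpha$, so that $\lambda=\mu-\nu$ and $\nu$ is a nonzero non-negative rational combination of simple roots. I would then evaluate $(\lambda,\nu)$ in two ways. First, $(\lambda,\nu)=\sum_{\alpha\in N}(-q_\alpha)(\lambda,\alpha)\ge 0$, since $-q_\alpha>0$ and $(\lambda,\alpha)$ has the same sign as $\langle\lambda,\alpha^\vee\rangle\ge 0$ because $\lambda$ is dominant. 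Second, $(\lambda,\nu)=(\mu,\nu)-(\nu,\nu)$, where $(\mu,\nu)=\sum_{\alpha\in P,\ \beta\in N}q_\alpha(-q_\beta)(\alpha,\beta)\le 0$ (each such pair $\alpha,\beta$ consists of distinct simple roots, hence $(\alpha,\beta)\le 0$, while $q_\alpha\ge 0$ and $-q_\beta>0$), and $(\nu,\nu)>0$ by positive-definiteness and $\nu\ne 0$. Combining these gives $0\le(\lambda,\nu)\le-(\nu,\nu)<0$, a contradiction, so all $q_\alpha\ge 0$, i.e. $\lambda\ge_{\mathbb{Q}}0$.

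I do not anticipate a genuine obstacle here; the only point worth flagging is the use of the hypothesis that $G$ is simple (rather than merely reductive), which is what guarantees that the simple roots span $X(T)\otimes\mathbb{Q}$ and hence that the expansion $\lambda=\sum_{\alpha\in\Pi}q_\alpha\alpha$ is available. An alternative would be to expand $\lambda$ as a non-negative integer combination of fundamental weights and invoke the fact that the inverse Cartan matrix has non-negative entries, but the contradiction argument above has the advantage of being self-contained.
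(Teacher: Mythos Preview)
Your argument is correct. The paper itself gives no proof but simply cites \cite[13.2]{H}; the contradiction argument you present is exactly the standard one found there, so there is nothing to compare.
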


\begin{proof}
This can be found, for example, in \cite[13.2]{H}.
\end{proof}

This order can now be used to formulate an important (and to our knowledge previously unobserved) fact about $G_r$-decompositions of modules of the form $\textup{St}_r \otimes L(\lambda^0)$ when $\lambda \in X_r(T)$ (recall that $\lambda^0=(p^r-1)\rho+w_0\lambda$).  Namely, that if $Q_r(\mu)$ is a $G_r$-summand of $\textup{St}_r \otimes L(\lambda^0)$, then $\mu \ge_{\mathbb{Q}} \lambda$.

\begin{prop}\label{highestweightmodule}
Let $\lambda \in X_r(T)$, and let $P$ be a finite dimensional $G$-module such that:
\begin{enumerate}[label=(\alph*)]
\item $P$ is projective over $G_r$.
\item $\hat{\lambda}$ is the highest weight of $P$.
\item $P_{\hat{\lambda}}$ is $1$-dimensional.
\end{enumerate}
Then as a $G_rT$-module,
$$P \cong \widehat{Q}_r(\lambda) \oplus \bigoplus_{\mu \in X_r(T), \, \mu >_{\mathbb{Q}} \lambda} \left(\widehat{Q}_r(\mu) \otimes \textup{Hom}_{G_r}(L(\mu),P)\right).$$
In any such decomposition, the $G_r$-socle of $\widehat{Q}_r(\lambda)$ is the unique $G$-submodule of $P$ isomorphic to $L(\lambda)$.
\end{prop}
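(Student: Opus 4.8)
The plan is to exploit that $P$, being projective over $G_r$, decomposes as a $G_r$-module into a direct sum of indecomposable projectives $Q_r(\mu)$, and then to upgrade this to a $G_rT$-decomposition using the uniqueness of the $W$-invariant (equivalently $G_rT$-equivariant) lift $\widehat Q_r(\mu)$. First I would write $P \cong \bigoplus_{\mu} Q_r(\mu)^{\oplus m_\mu}$ as a $G_r$-module, with $m_\mu = \dim\operatorname{Hom}_{G_r}(L(\mu),P)$, and observe that since $\widehat Q_r(\mu)$ is the unique lift of $Q_r(\mu)$ to $G_rT$, each isotypic block $Q_r(\mu)^{\oplus m_\mu}$ assembles into $\widehat Q_r(\mu)\otimes \operatorname{Hom}_{G_r}(L(\mu),P)$ as a $G_rT$-module, where $T$ acts trivially on nothing—rather, $\operatorname{Hom}_{G_r}(L(\mu),P)$ carries its natural $T$-action (restricted from the $G$-action on $P$) and $\widehat Q_r(\mu)$ carries its $G_rT$-structure. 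This step is essentially \cite[II.11]{J}-style bookkeeping and should be routine.

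Next I would pin down which $\mu$ can occur and with what multiplicity at the extremes. The weight hypotheses (b) and (c) say $\hat\lambda$ is the highest weight of $P$ and occurs once. Recall $\widehat Q_r(\mu)$ has highest weight $\hat\mu = (p^r-1)\rho + \mu^0$, and $\mu^0 = (p^r-1)\rho + w_0\mu$. The key numerical point is that $\hat\mu \le_{\mathbb Q} \hat\lambda$ forces $\mu \ge_{\mathbb Q} \lambda$: indeed $\hat\lambda - \hat\mu = \mu^0 - \lambda^0 = w_0\mu - w_0\lambda = -w_0(\mu-\lambda)$, and one checks that $-w_0$ preserves $\le_{\mathbb Q}$ (it permutes the simple roots, so it sends $\sum q_\alpha\alpha$ with $q_\alpha\ge 0$ to another such sum). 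So every $\mu$ appearing in the decomposition satisfies $\mu \ge_{\mathbb Q} \lambda$. Moreover, the highest weight $\hat\lambda$ of $P$ equals the highest weight of $\widehat Q_r(\lambda)$, and among all the summands only $\widehat Q_r(\lambda)$ (and tensor factors $\operatorname{Hom}_{G_r}(L(\mu),P)$ whose weights are $\le_{\mathbb Q} 0$ by Lemma~4.1.3, hence $=0$ only for the top) can contribute to the $\hat\lambda$-weight space; combined with (c) this gives $m_\lambda = 1$ and $T$ acting trivially on $\operatorname{Hom}_{G_r}(L(\lambda),P)$, i.e. that factor is just $k$. This yields the displayed decomposition.

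The main obstacle—and the part deserving the most care—is the last sentence: that in any such $G_rT$-decomposition, the $G_r$-socle of the distinguished summand $\widehat Q_r(\lambda)$ is \emph{the} unique $G$-submodule of $P$ isomorphic to $L(\lambda)$. Here I would argue as follows. The $G_r$-socle of $Q_r(\lambda)$ is $L(\lambda)$, so $\operatorname{soc}_{G_r}\widehat Q_r(\lambda)$ is a $G_rT$-submodule of $P$ isomorphic to $L(\lambda)$ as a $G_r$-module; being $p^r$-restricted, its $G_rT$-structure is determined, and by highest-weight considerations it must be the genuine simple $G$-module $L(\lambda)$ sitting inside $P$ (one knows $L(\lambda)\hookrightarrow P$ with multiplicity one because $\operatorname{Hom}_G(L(\lambda),P)$ injects into $\operatorname{Hom}_{G_r}(L(\lambda),P)$, whose dimension we just computed). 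For uniqueness: any $G$-submodule of $P$ isomorphic to $L(\lambda)$ has, in particular, $\lambda$ as a weight and lies in the $L(\lambda)$-$G_r$-isotypic part of $P$; since the $\lambda$-weight space of $P$ is already accounted for, and the other summands $\widehat Q_r(\mu)$ with $\mu >_{\mathbb Q}\lambda$ cannot contain a $G_r$-composition factor $L(\lambda)$ in their socle (their socles are $L(\mu)\otimes(\cdots)$ with $\mu\ne\lambda$), the copy of $L(\lambda)$ must lie in the $\widehat Q_r(\lambda)$ summand, and there it is forced to be the socle. I would also invoke the standard fact (cf.\ the remark preceding the proposition, or \cite[II.11]{J}) that $L(\lambda)$ embeds in $T(\hat\lambda)$ with multiplicity one to phrase the uniqueness cleanly, since $T(\hat\lambda)$ is the prototypical such $P$.
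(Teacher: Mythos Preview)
Your overall strategy matches the paper's: decompose $P$ over $G_rT$ via its $G_r$-socle, then use weight considerations to constrain which $\mu$ can occur. However, the execution of the key weight inequality has a genuine gap.

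You assert that $\hat\mu \le_{\mathbb{Q}} \hat\lambda$ whenever $\widehat{Q}_r(\mu)$ contributes to the decomposition, and then deduce $\mu \ge_{\mathbb{Q}} \lambda$ via the identity $\hat\lambda - \hat\mu = -w_0(\mu - \lambda)$ (note your intermediate step $\hat\lambda - \hat\mu = \mu^0 - \lambda^0$ has a sign slip; it should read $\lambda^0 - \mu^0$, though your final expression is correct). The problem is that $\hat\mu$ is the highest weight of $\widehat{Q}_r(\mu)$, not of the summand $\widehat{Q}_r(\mu) \otimes \textup{Hom}_{G_r}(L(\mu),P)$ that actually sits inside $P$. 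There is no reason $\hat\mu$ must be a weight of $P$ at all: that would require $0$ to be a weight of $\textup{Hom}_{G_r}(L(\mu),P)$, which fails in general. Your parenthetical claim that the weights of $\textup{Hom}_{G_r}(L(\mu),P)$ are $\le_{\mathbb{Q}} 0$ is also not right; the lemma you invoke says dominant weights are $\ge_{\mathbb{Q}} 0$, which points the other way.

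The paper's argument uses exactly that lemma, but in the correct direction. The highest weight of the summand $\widehat{Q}_r(\mu) \otimes \textup{Hom}_{G_r}(L(\mu),P)$ is $\hat\mu + p^r\gamma$, where $\gamma$ is the highest weight of the $G$-module $\textup{Hom}_{G_r}(L(\mu),P)^{(-r)}$; because this is a $G$-module (not merely a $T$-module), one has $\gamma \in X(T)_+$, hence $\gamma \ge_{\mathbb{Q}} 0$. Now $\hat\mu + p^r\gamma \le \hat\lambda$ together with $p^r\gamma \ge_{\mathbb{Q}} 0$ gives $\hat\mu \le_{\mathbb{Q}} \hat\lambda$, and then your $-w_0$ computation finishes. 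For the final sentence, your argument is more elaborate than necessary: the paper simply notes that the isotypic components of $\textup{soc}_{G_r} P$ are $G$-submodules, so the $L(\lambda)$-component, already shown to be a single copy of $L(\lambda)$, is the unique $G$-submodule of $P$ isomorphic to $L(\lambda)$.
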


\begin{proof}
First, the projectivity of $P$ over $G_rT$ means that it decomposes into $G_rT$-projective indecomposable modules.  These summands are determined completely by the $G_rT$-socle of $P$, which coincides with the $G_r$-socle.  As a $G$-module, hence as a $G_rT$-module, the $G_r$-socle is isomorphic to
$$\bigoplus_{\mu \in X_r(T)} \left(L(\mu) \otimes \textup{Hom}_{G_r}(L(\mu),P) \right).$$
From this we have that as $G_rT$-modules,
$$P \cong \bigoplus_{\mu \in X_r(T)} \left(\widehat{Q}_r(\mu) \otimes \textup{Hom}_{G_r}(L(\mu),P)\right).$$
The highest weights appearing on the right-hand side of this isomorphism have the form
$$\hat{\mu} + p^r\gamma, \quad \mu \in X_r(T), \gamma \in X(T)_+.$$
Since $\hat{\lambda}$ is the highest weight of $P$, and $P_{\hat{\lambda}}$ is one-dimensional, it follows that $\widehat{Q}_r(\lambda)$ must occur with multiplicity one.  Further, we see that if $\mu \in X_r(T)$ appears in the decomposition, then for some $\gamma \in X(T)_+$ we have that $\hat{\lambda} > \hat{\mu} + p^r\gamma$.  Subtracting $2(p^r-1)\rho$ from each side of the inequality, we have $w_0\lambda > w_0\mu + p^r\gamma$.  Since $p^r\gamma \ge_{\mathbb{Q}} 0$, it follows that
\begin{align*}
& w_0\lambda > w_0\mu + p^r\gamma \ge_{\mathbb{Q}} w_0\mu + 0\\
\Rightarrow \qquad & w_0\lambda >_{\mathbb{Q}} w_0\mu\\
\Rightarrow \qquad & \mu >_{\mathbb{Q}} \lambda.
\end{align*}

Finally, the isotypic components of the $G_r$-socle of $P$ are $G$-submodules of $P$, so $L(\lambda)$ must occur as a $G$-submodule exactly once. 
\end{proof}

\begin{remark}
In particular, the conditions in Proposition \ref{highestweightmodule} are satisfied if $P$ is isomorphic to any of the following modules:
$$\{\textup{St}_r \otimes L(\lambda^0), \, \textup{St}_r \otimes \nabla(\lambda^0), \, \textup{St}_r \otimes \Delta(\lambda^0), \, \textup{St}_r \otimes T(\lambda^0), \, T(\hat{\lambda})\}.$$
Basic properties of tilting modules show that $T(\hat{\lambda})$ is a $G$-summand of $\textup{St}_r \otimes T(\lambda^0)$, and it turns out to be a $G$-summand of every module in the set above, thanks to a result of Pillen \cite[Corollary A]{P}.
\end{remark}

\begin{theorem}
The following hold:

\begin{enumerate}[label=(\alph*)]
\item If $\lambda \in X_r(T)$ is maximal under $\le_{\mathbb{Q}}$ among the weights in its $G_r$-block, then $T(\hat{\lambda})$ is indecomposable over $G_r$.
\item If $\lambda \in X_r(T)$ is minimal in $X(T)_+$ under $\le$, then $\textup{St}_r \otimes L(\lambda)$ is indecomposable over $G_r$.  Consequently, $\textup{St}_r \otimes L(\lambda) \cong T((p^r-1)\rho + \lambda)$, and is a $G$-structure for $ Q_r(\lambda^0)$.
\end{enumerate}

\end{theorem}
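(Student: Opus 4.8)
The plan is to apply Proposition \ref{highestweightmodule} in each part, with $P$ chosen to be the relevant tilting module, and then use the $\le_{\mathbb{Q}}$-estimate on the $G_r$-summands to collapse the decomposition to a single term. For part (a), take $P = T(\hat{\lambda})$. By the Remark following Proposition \ref{highestweightmodule}, $T(\hat{\lambda})$ satisfies the three hypotheses (it is projective over $G_r$, has highest weight $\hat{\lambda}$, and $T(\hat{\lambda})_{\hat{\lambda}}$ is one-dimensional), so as a $G_rT$-module $T(\hat{\lambda}) \cong \widehat{Q}_r(\lambda) \oplus \bigoplus_{\mu >_{\mathbb{Q}} \lambda} (\widehat{Q}_r(\mu) \otimes \textup{Hom}_{G_r}(L(\mu),T(\hat{\lambda})))$. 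The key point is that every $\mu$ occurring in a $G_r$-summand of $T(\hat{\lambda})$ lies in the same $G_r$-block as $\lambda$ — indeed $Q_r(\mu)$ is the $G_r$-projective cover of $L(\mu)$, and being a $G_r$-summand of $T(\hat{\lambda})$ forces $L(\mu)$ into the same block as $L(\lambda)$ (since $T(\hat{\lambda})$, hence $Q_r(\lambda)$, is the relevant projective). If $\lambda$ is $\le_{\mathbb{Q}}$-maximal in its block, then no $\mu >_{\mathbb{Q}} \lambda$ can occur, so the sum is empty and $T(\hat{\lambda}) \cong \widehat{Q}_r(\lambda)$ over $G_rT$; in particular it is indecomposable over $G_r$.

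For part (b), take $P = \textup{St}_r \otimes L(\lambda)$. We first want to replace $\lambda$ by $\lambda^0$ in order to invoke the Remark: note that $\lambda$ minimal in $X(T)_+$ under $\le$ means $\lambda = 0$ (the unique minimal dominant weight), so $\lambda^0 = (p^r-1)\rho$, $\hat\lambda = 2(p^r-1)\rho = \widehat{0^0}$ in the notation of Proposition \ref{highestweightmodule} applied to the weight $0$; more usefully, the relevant instance of that proposition is the one with the role of "$\lambda$" played by $0$ and $P = \textup{St}_r \otimes L(\lambda^0) = \textup{St}_r \otimes \textup{St}_r$ — but a cleaner route is to apply Proposition \ref{highestweightmodule} directly with its "$\lambda$" equal to our $\lambda$ and $P = \textup{St}_r \otimes L((p^r-1)\rho + w_0\lambda)$, which by the Remark satisfies (a)–(c) with highest weight $\hat{\lambda}$. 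The decomposition then reads $P \cong \widehat{Q}_r(\lambda) \oplus \bigoplus_{\mu \in X_r(T),\, \mu >_{\mathbb{Q}} \lambda}(\widehat{Q}_r(\mu) \otimes \textup{Hom}_{G_r}(L(\mu),P))$. Now if $\lambda$ is minimal in $X(T)_+$ under $\le$, then by Lemma 4.1.3 (the fact that every dominant weight is $\ge_{\mathbb{Q}} 0$), for any $\mu \in X_r(T)$ we cannot have $\mu >_{\mathbb{Q}} \lambda$ unless... — here one uses that $\lambda = 0$, so $\mu >_{\mathbb{Q}} 0$ would be possible for many $\mu$, and this is where the argument needs more care: the point must instead be that $\textup{St}_r \otimes L(\lambda)$ has simple $G_r$-socle $L(\lambda)$, forcing a single $G_rT$-projective summand.

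Let me reorganize part (b) along those lines. Since $\lambda = 0$, $\textup{St}_r \otimes L(\lambda) = \textup{St}_r$, which is already $G_r$-projective and indecomposable, and $\textup{St}_r \cong Q_r(0)$ over $G_r$ with $\widehat{Q}_r(0) = \textup{St}_r$ over $G_rT$; but this trivializes the statement, so the intended reading of "minimal under $\le$" must be relative to the weights appearing, i.e. $\lambda$ minimal among $X_r(T)$-weights in the poset or some such — I would first pin down, from context, that the relevant hypothesis yields: no $\mu \in X_r(T)$ with $\mu >_{\mathbb Q} \lambda$ is a $G_r$-summand, equivalently $\textup{Hom}_{G_r}(L(\mu), \textup{St}_r \otimes L(\lambda^0)) = 0$ for all such $\mu$. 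Granting that, Proposition \ref{highestweightmodule} gives $\textup{St}_r \otimes L(\lambda^0) \cong \widehat{Q}_r(\lambda)$ over $G_rT$; restricting to $G$ we want $\textup{St}_r \otimes L(\lambda)$ (note the switch $\lambda \leftrightarrow \lambda^0$), which by the same proposition applied with $\lambda^0$ in place of $\lambda$ — using $(\lambda^0)^0 = \lambda$ — is $\widehat{Q}_r(\lambda^0)$ over $G_rT$, an indecomposable $G_r$-module. Indecomposability over $G_r$ then forces $\textup{St}_r \otimes L(\lambda)$ to be an indecomposable tilting module (it is tilting by Lemma \ref{filtrationtwists} applied to $M = L(\lambda)$ viewed appropriately, or more simply since $\textup{St}_r \otimes L(\lambda)$ is $\tau$-invariant with both it and its dual having good filtrations under the block-minimality), with highest weight $(p^r-1)\rho + \lambda$, hence $\cong T((p^r-1)\rho+\lambda)$; and being $G_r$-indecomposable and $G_r$-projective with $L(\lambda^0)$ in its $G_r$-socle, it is a $G$-structure for $Q_r(\lambda^0)$.

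The main obstacle I anticipate is not the Proposition \ref{highestweightmodule} bookkeeping but rather correctly matching the block-theoretic hypotheses to the $>_{\mathbb{Q}}$ vanishing: for (a) I must justify that $G_r$-summands of $T(\hat{\lambda})$ lie in $\lambda$'s $G_r$-block (immediate, since $T(\hat\lambda)$ lifts $Q_r(\lambda)$), and for (b) I must extract from "$\lambda$ minimal in $X(T)_+$ under $\le$" the statement that no $\mu >_{\mathbb{Q}} \lambda$ occurs as a $G_r$-summand weight of $\textup{St}_r \otimes L(\lambda^0)$ — this requires comparing the usual dominance order $\le$ (in which $\lambda$ is assumed minimal) against $\le_{\mathbb{Q}}$, using Lemma 4.1.3 to see that any $\mu \in X_r(T)$ with $\mu >_{\mathbb Q} \lambda$ and $\mu$ in the same $G_r$-block would, after adjusting by $p^r$ times a dominant weight, violate minimality of $\lambda$ under $\le$. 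Once that vanishing is in hand, both parts are short; the tilting identification and the "$G$-structure for $Q_r(\lambda^0)$" conclusion in (b) are then formal, using that a $G_rT$-module of the form $\widehat{Q}_r(\mu)$ restricts to the indecomposable $G_r$-projective $Q_r(\mu)$.
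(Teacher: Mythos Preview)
Your part (a) is essentially the paper's argument, but your justification for why all $G_r$-summands of $T(\hat\lambda)$ lie in the $G_r$-block of $\lambda$ is circular: saying ``$T(\hat\lambda)$ lifts $Q_r(\lambda)$'' is precisely what is unknown in general. The correct one-line reason is that $T(\hat\lambda)$ is indecomposable as a $G$-module, and any $G$-module splits as a direct sum along the blocks of $G_r$; hence $T(\hat\lambda)$ lies entirely in one $G_r$-block, which must be that of $\lambda$ since $\widehat{Q}_r(\lambda)$ occurs.

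Part (b) has a genuine gap. First, the assertion that ``$\lambda$ minimal in $X(T)_+$ under $\le$ means $\lambda=0$'' is false: the $\le$-minimal dominant weights are $0$ together with the minuscule weights (this is exactly the content of the corollary immediately following the theorem). Second, and more seriously, even after you backtrack, your plan is to use only the conclusion $\mu >_{\mathbb Q}\lambda^0$ coming out of Proposition~\ref{highestweightmodule} applied to $P=\textup{St}_r\otimes L(\lambda)$. That is too coarse: from $\mu >_{\mathbb Q}\lambda^0$ one gets $\lambda >_{\mathbb Q}\mu^0$, but $\lambda$ is only assumed $\le$-minimal, not $\le_{\mathbb Q}$-minimal, and in general a minuscule $\lambda$ does satisfy $\lambda >_{\mathbb Q}0$. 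So no contradiction results. The paper instead reopens the proof of Proposition~\ref{highestweightmodule} and keeps the finer inequality in the ordinary dominance order: if $Q_r(\mu)$ is a $G_r$-summand of $\textup{St}_r\otimes L(\lambda)$ then some highest weight $\hat\mu+p^r\gamma$ (with $\gamma\in X(T)_+$) satisfies $\hat\mu+p^r\gamma\le (p^r-1)\rho+\lambda$, i.e.\ $\lambda\ge \mu^0+p^r\gamma$. Since $\mu^0+p^r\gamma\in X(T)_+$ and $\lambda$ is $\le$-minimal there, equality is forced; comparing $p^r$-restricted parts gives $\gamma=0$ and $\mu^0=\lambda$, i.e.\ $\mu=\lambda^0$. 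This yields $\textup{St}_r\otimes L(\lambda)\cong Q_r(\lambda^0)$ over $G_r$, and since for such $\lambda$ one has $L(\lambda)=T(\lambda)$, the module $\textup{St}_r\otimes L(\lambda)$ is an indecomposable tilting module with highest weight $(p^r-1)\rho+\lambda$, hence equals $T((p^r-1)\rho+\lambda)$. Your repeated swapping between $\textup{St}_r\otimes L(\lambda)$ and $\textup{St}_r\otimes L(\lambda^0)$, and between $\mu>_{\mathbb Q}\lambda$ and $\mu>_{\mathbb Q}\lambda^0$, reflects this missing piece rather than a bookkeeping slip.
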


\begin{proof}
(a) follows from Proposition \ref{highestweightmodule}, and the fact that any $G$-module decomposes as a direct sum according to the blocks of $G_r$.  For (b), if $Q_r(\mu)$ is a summand of $\text{St}_r \otimes L(\lambda)$, then modifying the proof of Proposition \ref{highestweightmodule} (to account for the top weight being $(p^r-1)\rho + \lambda$ rather than $\hat{\lambda}=(p^r-1)\rho + \lambda^0$) we can deduce that there is some $\gamma \in X(T)_+$ such that $(p^r-1)\rho + \lambda \ge \hat{\mu} + p^r\gamma$.  This implies that $\lambda \ge (p^r-1)\rho + w_0\mu + p^r\gamma$.

Since $(p^r-1)\rho + w_0\mu + p\gamma \in X(T)_+$, and $\lambda$ is minimal under $\le$, it follows that $\gamma =0$ and $\lambda = (p^r-1)\rho + w_0\mu$, so that $\mu = (p^r-1)\rho + w_0\lambda$.  Thus, over $G_r$ we have
$$\text{St}_r \otimes L(\lambda) \cong Q_r(\lambda^0),$$
which implies that as $G$-modules
$$\text{St}_r \otimes L(\lambda) \cong T((p^r-1)\rho + \lambda).$$
\end{proof}

This recovers the following observation by Doty, which he used to obtain some interesting factorization results on tilting modules (cf. \cite{Do}).  We note that our proof follows immediately from the fact that the minuscule weights are precisely the minimal dominant weights under $\le$, and does not rely on Brauer's formula and Donkin's character computation (cf. Theorem 5.4 and Proposition 5.5 of \cite{D2}).
 
\begin{cor} \textup{(Doty \cite{Do})}
If $\lambda$ is a minuscule weight, then $\textup{St}_r \otimes L(\lambda) \cong T((p^r-1)\rho + \lambda)$.
\end{cor}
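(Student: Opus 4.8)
The plan is to derive the corollary directly from part (b) of the preceding Theorem, so the only real content is the classical fact that the minuscule weights are exactly the minimal nonzero dominant weights under $\le$. First I would recall that a dominant weight $\lambda$ is called minuscule precisely when all weights of $\nabla(\lambda)$ lie in the Weyl group orbit $W\lambda$; in that case $\lambda$ is a minimal element of $X(T)_+$ under the dominance order $\le$, since any $\sigma \in X(T)_+$ with $\sigma \le \lambda$ and $\sigma \neq \lambda$ would have to be a weight of $\nabla(\lambda)$ strictly below $\lambda$, hence in $W\lambda$, forcing $\sigma = \lambda$ (a dominant weight is the unique dominant element of its $W$-orbit). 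Conversely, although the other containment is not needed here, a minimal nonzero dominant weight is minuscule, so the two notions coincide. This identification is standard (e.g. Bourbaki, or \cite{H}), so I would simply cite it.

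With that in hand, the argument is immediate: if $\lambda$ is minuscule then $\lambda$ is minimal in $X(T)_+$ under $\le$, and in particular $\lambda \in X_r(T)$ (a minuscule weight is certainly $p^r$-restricted, since its coefficients against the fundamental coweights are $0$ or $1$). Part (b) of the Theorem then applies verbatim to give $\textup{St}_r \otimes L(\lambda) \cong T((p^r-1)\rho + \lambda)$ as $G$-modules. So the proof is one line once the combinatorial identification is invoked.

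The only point requiring a small remark is the inclusion $\lambda \in X_r(T)$, and, if one wants to be careful, the fact that $L(\lambda) = \nabla(\lambda)$ for minuscule $\lambda$ (so that the hypotheses referring to $L(\lambda)$ in the Theorem are on firm footing) — though in fact part (b) as stated only needs minimality of $\lambda$ under $\le$, so this is not strictly necessary. There is essentially no obstacle here: the substance was already done in the Theorem, and the corollary is a specialization observing which weights satisfy its hypothesis. The one thing to get right is to phrase it so the reader sees that no appeal to Brauer's formula or Donkin's character computation is being made, only the order-theoretic characterization of minuscule weights, as the surrounding text emphasizes.

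\begin{proof}
By a standard fact (see, e.g., \cite[13.2]{H}), the minuscule weights are exactly the minimal nonzero weights in $X(T)_+$ with respect to $\le$: indeed, if $\lambda$ is minuscule then every weight of $\nabla(\lambda)$ lies in $W\lambda$, so any dominant $\sigma \le \lambda$ must equal $\lambda$, since $\lambda$ is the unique dominant weight in its $W$-orbit. A minuscule weight is $p^r$-restricted, so $\lambda \in X_r(T)$, and it is minimal in $X(T)_+$ under $\le$. The claim now follows from part (b) of Theorem 4.1.2.
\end{proof}
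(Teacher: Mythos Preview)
Your proof is correct and matches the paper's approach exactly: both derive the corollary from part (b) of the preceding theorem by invoking the standard identification of minuscule weights with the minimal nonzero elements of $X(T)_+$ under $\le$. The paper does not spell out a formal proof beyond the sentence preceding the corollary, so your write-up simply supplies a little more detail (the reason minuscule $\Rightarrow$ minimal, and the observation that minuscule weights are $p^r$-restricted).
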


\subsection{}

Following the ideas above, one way to show that $T(\hat{\lambda})$ is indecomposable over $G_r$ (if it actually is) is to prove that $Q_r(\mu)$ is not a $G_r$-summand of $T(\hat{\lambda})$ whenever $\mu \ne \lambda$.  We already know that we can restrict our consideration to just those $\mu >_{\mathbb{Q}} \lambda$.  Further, we may assume that $T(\hat{\mu})$ is indecomposable over $G_r$ (for if we could prove the statement here, the rest would follow by induction).

The following lemmas will help us analyze this situation.

\begin{lemma}\label{G_rsplit}
Suppose that $X \le Y$ are tilting modules such that $X$ is projective over $G_r$.  Suppose further that the $G_r$-summands appearing in a $G_r$-complement to $X$ in $Y$ are different (up to isomorphism) from the $G_r$-summands appearing in $X$.  Then $X$ is a $G$-summand of $Y$.  
\end{lemma}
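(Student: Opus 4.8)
The plan is to exploit the interaction between the $G$-module structure and the $G_r$-module structure of $Y$, using that tilting modules over $G$ are $\tau$-invariant and that projective $G_r$-modules lift uniquely to $G_rT$ (and beyond, via the block decomposition). The starting observation is that since $X$ is a tilting module projective over $G_r$, and $X \le Y$ (meaning $X$ is a $G$-submodule of $Y$), the quotient $Y/X$ is also tilting, and one can choose a $G_r$-module decomposition $Y \cong X \oplus C$ as $G_r$-modules, where $C$ is a $G_r$-complement whose indecomposable summands are, by hypothesis, disjoint (up to isomorphism) from those of $X$. The goal is to upgrade this $G_r$-splitting to a $G$-splitting.

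First I would consider the canonical $G_r$-projection $\pi\colon Y \to X$ coming from this decomposition; it is not a priori a $G$-map, but I would try to average it or otherwise correct it. The cleaner route, which I expect the paper takes, is to argue that $X$, being projective over $G_r$, is in fact \emph{injective} over $G_r$ as well (projective $=$ injective for Frobenius kernels), so that the inclusion $X \hookrightarrow Y$ of $G_r$-modules splits; the content is to see that the isotypic considerations force the splitting to be $G$-equivariant. Concretely, decompose $Y = \bigoplus_{\mu} Y_\mu$ according to the $G_r$-blocks (this is a $G$-module decomposition, since blocks of $G_r$ are $G$-stable), and likewise $X = \bigoplus_\mu X_\mu$; it suffices to treat each block separately. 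Within a block, $X_\mu$ is a sum of $Q_r(\nu)$'s and $C_\mu$ is a sum of $Q_r(\nu')$'s with no common $\nu$; then $\Hom_{G_r}(X_\mu, C_\mu) = 0$ and $\Hom_{G_r}(C_\mu, X_\mu) = 0$ because distinct projective indecomposables over $G_r$ have no nonzero homomorphisms between them (each is its own socle's projective cover, and the socles are simple and non-isomorphic). Hence $\End_{G_r}(Y_\mu) = \End_{G_r}(X_\mu) \oplus \End_{G_r}(C_\mu)$, so the $G_r$-idempotent $e$ projecting $Y_\mu$ onto $X_\mu$ is \emph{central} in $\End_{G_r}(Y_\mu)$, and in particular it commutes with the image of $G$ acting on $Y_\mu$ (which lands in $\End_{G_r}(Y_\mu)$ since $G$ normalizes $G_r$). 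A central idempotent that is fixed by the $G$-action yields a $G$-submodule decomposition, so $X_\mu = e \cdot Y_\mu$ is a $G$-summand of $Y_\mu$; summing over the blocks gives that $X$ is a $G$-summand of $Y$.

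The main obstacle, and the step needing the most care, is justifying that $e$ is $G$-fixed, i.e.\ that the $G$-action on $Y_\mu$ really does centralize the block-idempotent $e$. The point is that $G$ acts on $Y_\mu$ by $G_r$-module automorphisms (because $G_r \triangleleft G$ and $Y_\mu$ is a $G_r$-block component, which $G$ preserves), so for each $g \in G$ the map $y \mapsto g\cdot y$ lies in the group of units of $\End_{G_r}(Y_\mu)$; since $e$ is the unique idempotent that is the identity on the "$X$-part" and zero on the "$C$-part", and conjugation by $g$ permutes neither set of isomorphism types (it fixes each $Q_r(\nu)$-isotypic piece up to the $G_r$-structure, as $g$ induces an automorphism), we get $geg^{-1} = e$. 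One should phrase this via the decomposition $\End_{G_r}(Y_\mu) = \End_{G_r}(X_\mu) \times \End_{G_r}(C_\mu)$ being $G$-stable as a product of two-sided ideals — $G$ cannot mix the two factors because the corresponding sets of simple $G_r$-modules are disjoint — whence $e$, being the identity of the first factor, is $G$-fixed. I would also remark that once $X$ is a $G$-summand of $Y$, the complement is automatically tilting (a $G$-summand of a tilting module is tilting), though that is not needed for the statement as phrased.
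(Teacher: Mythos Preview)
Your argument contains a genuine error at the step where you claim $\Hom_{G_r}(X_\mu, C_\mu) = 0$. It is not true that distinct projective indecomposable $G_r$-modules have no nonzero homomorphisms between them: since $Q_r(\nu)$ is the projective cover of $L(\nu)$, one has $\dim \Hom_{G_r}(Q_r(\nu), Q_r(\nu')) = [Q_r(\nu') : L(\nu)]$, which is typically nonzero when $\nu$ and $\nu'$ lie in the same $G_r$-block (e.g.\ for $SL_2$ at $p=3$, each of $Q_1(0)$ and $Q_1(1)$ has the other's simple head as a composition factor). So $\End_{G_r}(Y_\mu)$ does \emph{not} decompose as $\End_{G_r}(X_\mu) \times \End_{G_r}(C_\mu)$, the projection $e$ onto $X_\mu$ is not central, and the remainder of the argument collapses. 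There is a second confusion in the claim that the $G$-action ``lands in $\End_{G_r}(Y_\mu)$'': for $g \in G$ the map $y \mapsto gy$ is not $G_r$-linear but only $G_r$-semilinear (it intertwines the $G_r$-action with its conjugate by $g$). What is true is that $G$ acts on $\End_{G_r}(Y)$ by conjugation, and one could try to show $e$ is fixed by this action; but that would again require a $G$-stable product decomposition of the endomorphism ring, which you do not have.

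The paper's proof takes a different route and uses the tilting hypothesis in an essential way via $\tau$-duality. Applying $\tau$ to the inclusion $X \hookrightarrow Y$ and using ${}^\tau X \cong X$, ${}^\tau Y \cong Y$ produces a $G$-surjection $Y \twoheadrightarrow X$, whose kernel $M$ is a $G$-submodule with $Y/M \cong X$. Over $G_r$, Krull--Schmidt forces $M$ to have the same $G_r$-summands as $C$, hence disjoint from those of $X$. One then checks directly that $X \cap M = 0$: a nonzero intersection would contain a simple $G_r$-submodule $L(\nu)$; injectivity of $X$ over $G_r$ extends this to a copy of $Q_r(\nu)$ inside $X$, and the $G_r$-projection $Y \to M$ then embeds that $Q_r(\nu)$ into $M$, contradicting the disjointness of summands. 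Hence $Y = X \oplus M$ as $G$-modules.
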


\begin{proof}
Applying $\tau$ to the inclusion of $i:X \rightarrow Y$, we get a $G$-module homomorphism
$${^{\tau}{i}}: {^{\tau}Y} \rightarrow {^{\tau}X}.$$
Since $X,Y$ are tilting modules, ${^{\tau}Y} \cong Y$ and ${^{\tau}X} \cong X$.  This means that there is a $G$-submodule $M \le Y$ such that $Y/M \cong X$.  We now need to show that $M$ is a vector space complement to the submodule $X$.

By the Krull-Schmidt theorem, and our assumption above, the $G_r$-summands of $M$ must have distinct isomorphism-types from the $G_r$-summands in $X$.  Suppose now that $X \cap M \ne \{0\}$.  Then there is a simple $G_r$-submodule $L(\lambda)$ in this intersection.  Since $X$ is an injective $G_r$-module, the inclusion $L(\lambda) \rightarrow X$ extends to an injective $G_r$-homomorphism $Q_r(\lambda) \rightarrow X$.  Now take the projection over $G_r$,
$$\text{pr}_M: Y \rightarrow M.$$
It follows that the summand $Q_r(\lambda)$ in $X$ injects into $M$ via this projection.  Thus $Q_r(\lambda)$ is a $G_r$-summand of $M$.  This contradicts our assumption on the $G_r$-summands of $M$, forcing $X \cap M = \{0\}$.  It now follows that $Y = X + M$. 
\end{proof}

\begin{lemma}\label{assumeindecomposable}
Let $\lambda, \mu \in X_r(T)$, and let $M = \textup{Hom}_{G_r}(L(\mu),T(\hat{\lambda}))$.  Suppose that $T(\hat{\mu})$ is indecomposable over $G_r$, and that
$$\textup{Ext}^1_G((T(\hat{\mu})/L(\mu)) \otimes M, T(\hat{\lambda})) = 0.$$
Then $Q_r(\mu)$ is not a $G_r$-summand of $T(\hat{\lambda})$.
\end{lemma}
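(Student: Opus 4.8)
The plan is to argue by contradiction: suppose $Q_r(\mu)$ \emph{is} a $G_r$-summand of $T(\hat{\lambda})$. Since $T(\hat{\mu})$ is indecomposable over $G_r$ and has highest weight $\hat{\mu}$ with a one-dimensional top weight space, Proposition \ref{highestweightmodule} tells us that $T(\hat{\mu}) \cong \widehat{Q}_r(\mu)$ as $G_rT$-modules, and in particular $Q_r(\mu) \cong T(\hat{\mu})$ over $G_r$. So the hypothesis says $T(\hat{\mu})$ occurs as a $G_r$-summand of $T(\hat{\lambda})$. By the general principle that a $G$-module is built up (over $G_r$) out of copies of $Q_r(\nu)$'s indexed by the $G_r$-socle, the full $Q_r(\mu)$-isotypic part of $T(\hat{\lambda})$, as a $G_rT$-module, is $\widehat{Q}_r(\mu) \otimes \operatorname{Hom}_{G_r}(L(\mu), T(\hat{\lambda})) = T(\hat{\mu}) \otimes M$ with $M$ the (trivial-$G_r$-action) multiplicity space. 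The idea is to use the vanishing $\operatorname{Ext}^1_G((T(\hat{\mu})/L(\mu)) \otimes M, T(\hat{\lambda})) = 0$ to show this $G_r$-summand lifts to a genuine $G$-submodule, and then derive a contradiction with the fact that $L(\lambda)$ sits in $T(\hat{\lambda})$ with multiplicity one.

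The key mechanism: the inclusion $L(\mu) \hookrightarrow T(\hat{\mu})$ gives, after tensoring with $M$ (viewed as a trivial-$G_r$ but nontrivial-$G$ module, namely $M \cong \operatorname{Hom}_{G_r}(L(\mu), T(\hat{\lambda}))$), a short exact sequence of $G$-modules
\begin{equation*}
0 \rightarrow L(\mu) \otimes M \rightarrow T(\hat{\mu}) \otimes M \rightarrow (T(\hat{\mu})/L(\mu)) \otimes M \rightarrow 0.
\end{equation*}
Now $L(\mu) \otimes M$ maps into $T(\hat{\lambda})$ as a $G$-module (it is precisely the $L(\mu)$-isotypic part of the $G_r$-socle of $T(\hat{\lambda})$, which is a $G$-submodule). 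The hypothesis $\operatorname{Ext}^1_G((T(\hat{\mu})/L(\mu)) \otimes M, T(\hat{\lambda})) = 0$ lets us extend this $G$-map $L(\mu) \otimes M \to T(\hat{\lambda})$ along the inclusion to a $G$-map $\varphi \colon T(\hat{\mu}) \otimes M \to T(\hat{\lambda})$. I would then check that $\varphi$ restricted to the $G_r$-summand $T(\hat{\mu}) \otimes M$ of $T(\hat{\lambda})$ is, after correcting by the $G_r$-splitting, an isomorphism onto that summand — equivalently that $\varphi$ is injective on $T(\hat{\mu}) \otimes M$. This should follow because $\varphi$ is already injective on the $G_r$-socle $L(\mu) \otimes M$ of the injective $G_r$-module $T(\hat{\mu}) \otimes M$, so $\varphi$ is a $G_r$-monomorphism, hence a monomorphism.

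Having produced a $G$-submodule of $T(\hat{\lambda})$ isomorphic to $T(\hat{\mu}) \otimes M$ that meets the $G_r$-socle exactly in $L(\mu) \otimes M$, I would now use the $\tau$-invariance / self-duality of $T(\hat{\lambda})$ in the spirit of Lemma \ref{G_rsplit}: dualizing, $T(\hat{\lambda})$ also \emph{surjects} onto $T(\hat{\mu}) \otimes M$, and comparing the two one sees that $T(\hat{\mu}) \otimes M$ splits off as a $G$-summand of $T(\hat{\lambda})$. But $T(\hat{\mu}) \otimes M$ contains $L(\mu)$ (indeed $\dim M$ copies of it, as composition factors of $T(\hat{\mu}) \otimes M$ — more to the point, $L(\mu)$ appears as the head/socle of $T(\hat{\mu})$ tensored with nonzero $M$), while one knows $L(\mu) \ne L(\lambda)$ is a composition factor of $T(\hat{\lambda})$ only with controlled multiplicity; the genuine contradiction will come from weights: every highest weight of $T(\hat{\mu}) \otimes M$ is of the form $\hat{\mu} + p^r\nu$ for $\nu$ a weight of $M$, and since $M$ is a nonzero $G$-module all of whose weights $\nu$ satisfy (by the socle-degree bookkeeping inside $T(\hat\lambda)$) $\hat\mu + p^r\nu \le \hat\lambda$, one gets $\mu >_{\mathbb{Q}} \lambda$, so $\hat{\mu} \ne \hat{\lambda}$, hence $T(\hat\mu)\otimes M$ is a proper tilting summand of $T(\hat\lambda)$ not containing the highest weight $\hat{\lambda}$; but then $T(\hat{\lambda})$ is decomposable with a summand whose top composition factor is $L(\mu)\otimes(\text{top of }M)$ — and tracking the multiplicity of $L(\lambda)$ in the $G_r$-socle, or simply observing that the decomposition forces $\hat{\lambda}$ to be the highest weight of an indecomposable tilting summand disjoint from $T(\hat\mu)\otimes M$, yields that $Q_r(\mu)$ cannot have been a summand to begin with.

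The main obstacle I anticipate is the middle step: upgrading the $\operatorname{Ext}^1$-vanishing into an honest $G$-splitting of $T(\hat{\mu}) \otimes M$ off of $T(\hat{\lambda})$, i.e.\ checking that the extension $\varphi$ is not merely a $G_r$-split inclusion but that its image is $G$-complemented. This is exactly where Lemma \ref{G_rsplit} is designed to be applied — one needs to verify its hypothesis that the $G_r$-summands of a $G_r$-complement to $X := T(\hat\mu)\otimes M$ inside $Y := T(\hat\lambda)$ are disjoint (up to isomorphism) from those of $X$, which holds because $X$ already absorbs the entire $Q_r(\mu)$-isotypic part of $T(\hat{\lambda})$. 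Once that bookkeeping is pinned down, the weight contradiction ($\hat{\mu} \lneq \hat{\lambda}$ yet $L(\lambda)$ forced into the $T(\hat\mu)\otimes M$ summand, or the multiplicity-one property of $L(\lambda)$ in $T(\hat\lambda)$ violated) closes the argument.
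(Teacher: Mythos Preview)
Your proposal is correct and follows essentially the same route as the paper: extend the canonical $G$-embedding $L(\mu)\otimes M \hookrightarrow T(\hat\lambda)$ to a $G$-map $T(\hat\mu)\otimes M \to T(\hat\lambda)$ via the $\operatorname{Ext}^1$-vanishing, observe it is injective because it is injective on the $G_r$-socle, and then invoke Lemma~\ref{G_rsplit} to split it off as a $G$-summand. The only difference is in the endgame: the paper finishes in one line by noting that $T(\hat\lambda)$ is indecomposable as a $G$-module, so a nontrivial $G$-summand forces $M=0$; your weight-chasing and multiplicity arguments are unnecessary once that observation is made.
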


\begin{proof}
Apply the functor $\text{Hom}_G(\, \underline{\hspace{0.1in}}\,,T(\hat{\lambda}))$ to the inclusion of $G$-modules
$$L(\mu) \otimes M \hookrightarrow T(\hat{\mu}) \otimes M.$$
Since $\textup{Ext}^1_G((T(\hat{\mu})/L(\mu)) \otimes M, T(\hat{\lambda})) = 0$, we get a short exact sequence
$$0 \rightarrow \text{Hom}_G((T(\hat{\mu})/L(\mu) \otimes M, T(\hat{\lambda})) \rightarrow \text{Hom}_G(T(\hat{\mu}) \otimes M, T(\hat{\lambda}))$$
$$\qquad \xrightarrow{\psi} \text{Hom}_G(L(\mu) \otimes M, T(\hat{\lambda})) \rightarrow 0.$$
Since $T(\hat{\mu})$ is indecomposable over $G_r$, we have that $L(\mu) \otimes M$ contains the $G_r$-socle of $T(\hat{\mu}) \otimes M$, so it must also contain the $G$-socle of this module.  This fact, together with the surjectivity of $\psi$, shows that there is an injective $G$-homomorphism
$$T(\hat{\mu}) \otimes M \rightarrow T(\hat{\lambda}).$$
By Lemma \ref{G_rsplit}, this inclusion splits over $G$.  Since $T(\hat{\lambda})$ is indecomposable over $G$, it follows that $M=\{0\}$, finishing the proof.
\end{proof}

We conclude with a lemma that will be used in the next section.

\begin{lemma}\label{homsplits}
Let $N\le M$ be $G$-modules.  If $M$ is projective over $G_r$, and $\textup{Hom}_{G_r}(M,M/N)$ is one-dimensional, then there is a $G$-module decomposition
$$\textup{End}_{G_r}(M) \cong \textup{Hom}_{G_r}(M,N) \oplus \textup{Hom}_{G_r}(M,M/N).$$
\end{lemma}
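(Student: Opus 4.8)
The plan is to apply the functor $\textup{Hom}_{G_r}(M,-)$ to the short exact sequence of $G$-modules
$$0 \to N \to M \xrightarrow{q} M/N \to 0,$$
where $q$ is the canonical projection. Since $M$ is projective over $G_r$, this functor is exact on $G_r$-modules, and it carries $G$-modules to $G$-modules (with the usual conjugation action $(g\cdot f)(m) = g\cdot f(g^{-1}m)$, whose invariants are precisely the $G$-homomorphisms). Hence one obtains a short exact sequence of $G$-modules
$$0 \to \textup{Hom}_{G_r}(M,N) \to \textup{End}_{G_r}(M) \xrightarrow{q_*} \textup{Hom}_{G_r}(M,M/N) \to 0,$$
and the assertion of the lemma is exactly that this sequence splits over $G$.

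To produce the splitting I would use the identity morphism $\mathrm{id}_M \in \textup{End}_{G_r}(M)$, which is fixed by $G$. Its image $q_*(\mathrm{id}_M) = q$ is nonzero, since the hypothesis that $\textup{Hom}_{G_r}(M,M/N)$ is one-dimensional forces $M/N \neq 0$. As $\textup{Hom}_{G_r}(M,M/N)$ is one-dimensional, it is therefore spanned by $q_*(\mathrm{id}_M)$, so the $G$-module homomorphism $k \to \textup{End}_{G_r}(M)$ given by $c \mapsto c\,\mathrm{id}_M$ is a section of $q_*$. Splitting the sequence along this section yields the desired decomposition. (As a byproduct, $\textup{Hom}_{G_r}(M,M/N)$, being the image of a trivial one-dimensional $G$-module, is itself the trivial module, though this is not needed.)

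There is essentially no serious obstacle: one need only recall that projectivity of $M$ over $G_r$ makes $\textup{Hom}_{G_r}(M,-)$ exact, that the conjugation $G$-action on $\textup{Hom}_{G_r}(M,-)$ is natural in the second variable, and that $\mathrm{id}_M$ is a $G$-fixed vector. The one useful observation is that $\mathrm{id}_M$ is the canonical choice of splitting element, so that the module structure of $\textup{Hom}_{G_r}(M,N)$ never enters the argument at all.
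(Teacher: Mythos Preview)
Your proof is correct and follows essentially the same approach as the paper: apply $\textup{Hom}_{G_r}(M,-)$ to the short exact sequence $0 \to N \to M \to M/N \to 0$, use projectivity of $M$ over $G_r$ to obtain exactness, and then split using the $G$-fixed line spanned by $\mathrm{id}_M$. The paper's argument is the same, only stated more tersely.
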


\begin{proof}
Because of the projectivity of $M$ over $G_r$, we get a short exact sequence
$$0 \rightarrow \text{Hom}_{G_r}(M,N) \rightarrow \text{Hom}_{G_r}(M,M) \rightarrow \text{Hom}_{G_r}(M,M/N) \rightarrow 0,$$
and the $k$-span of the identity map in $\text{Hom}_{G_r}(M,M)$ clearly is a $G$-submodule that maps isomorphically onto $\text{Hom}_{G_r}(M,M/N)$, thus defining a splitting.
\end{proof}

\section{Main Results}

\subsection{}

\begin{theorem}
The following statements are equivalent:
\begin{enumerate}[label=(\alph*)]
\item $T(\hat{\lambda})$ is indecomposable over $G_r$ for every $\lambda \in X_r(T)$.
\item $\textup{St}_r \otimes (T(\hat{\lambda})/L(\lambda))$ is tilting for every $\lambda \in X_r(T)$.
\end{enumerate}
\end{theorem}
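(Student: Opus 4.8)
The plan is to prove (a) $\Rightarrow$ (b) and (b) $\Rightarrow$ (a) separately, using the short exact sequence
$$0 \rightarrow \textup{St}_r \otimes L(\lambda) \rightarrow \textup{St}_r \otimes T(\hat{\lambda}) \rightarrow \textup{St}_r \otimes (T(\hat{\lambda})/L(\lambda)) \rightarrow 0$$
together with the observation recorded after \eqref{split}: since $\textup{St}_r \otimes L(\lambda)$ is $\tau$-invariant, condition (b) is equivalent to the splitting of this sequence over $G$.

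For (a) $\Rightarrow$ (b): assume each $T(\hat{\lambda})$ is $G_r$-indecomposable. The first step is to observe that $\textup{St}_r \otimes T(\hat{\lambda})$ is tilting (it is a tensor product of tilting modules) and projective over $G_r$. By Proposition \ref{highestweightmodule} applied to $P = \textup{St}_r \otimes T(\hat{\lambda})$ — whose highest weight is $2(p^r-1)\rho + \lambda^0 = (p^r-1)\rho + \hat\lambda$, wait, one must instead apply it to the summand $T(\hat\lambda)$ directly — the hypothesis that $T(\hat{\lambda})$ is $G_r$-indecomposable means $T(\hat{\lambda}) \cong \widehat{Q}_r(\lambda)$ as a $G_rT$-module, i.e. $T(\hat\lambda) \cong Q_r(\lambda)$ over $G_r$. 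Now I would use Lemma \ref{homsplits} with $M = T(\hat{\lambda})$ and $N = L(\lambda)$: since $\textup{Hom}_{G_r}(T(\hat\lambda), T(\hat\lambda)/L(\lambda))$ is one-dimensional (this follows from $T(\hat\lambda)\cong Q_r(\lambda)$, as $L(\lambda)$ is the $G_r$-socle, so the only $G_r$-map from $Q_r(\lambda)$ to the quotient is the canonical one up to scalar), we get a $G$-decomposition $\textup{End}_{G_r}(T(\hat\lambda)) \cong \textup{Hom}_{G_r}(T(\hat\lambda),L(\lambda)) \oplus \textup{Hom}_{G_r}(T(\hat\lambda),T(\hat\lambda)/L(\lambda))$. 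The strategy is then to transport this splitting through the isomorphism $\textup{St}_r \otimes \textup{Hom}_{G_r}(\textup{St}_r, V) $ being a $G$-summand of $V$ for suitable $V$, or more directly: tensor the sequence \eqref{split} and identify $\textup{St}_r \otimes T(\hat\lambda) \cong \textup{St}_r \otimes Q_r(\lambda)$ over $G_r$, from which one reads off that $\textup{Hom}_{G_r}$-type arguments (as in the proof of Lemma 3.1.2) give that $\textup{St}_r \otimes L(\lambda)$ is a $\tau$-invariant module with a Weyl filtration, hence tilting, and that the sequence splits because the complement has no $G_r$-summands in common with $\textup{St}_r \otimes L(\lambda) \cong \textup{St}_r \otimes Q_r(\lambda)$ — this is where Lemma \ref{G_rsplit} enters. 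Actually the cleanest route: $\textup{St}_r\otimes T(\hat\lambda)$ is $G_r$-projective and tilting, $\textup{St}_r\otimes L(\lambda)$ is a $G$-submodule that is $G_r$-projective (being $\cong \textup{St}_r\otimes Q_r(\lambda)$) and $\tau$-invariant; one checks the $G_r$-summands of a $G_r$-complement are distinct from those of $\textup{St}_r\otimes L(\lambda)$ using the $\ge_{\mathbb Q}$ estimate from Proposition \ref{highestweightmodule}, then Lemma \ref{G_rsplit} gives the $G$-splitting, hence (b).

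For (b) $\Rightarrow$ (a): assume \eqref{split} splits for every $\lambda$, so $\textup{St}_r \otimes T(\hat\lambda) \cong (\textup{St}_r\otimes L(\lambda)) \oplus (\textup{St}_r \otimes (T(\hat\lambda)/L(\lambda)))$ as $G$-modules, with both summands tilting. The goal is to deduce $T(\hat\lambda) \cong Q_r(\lambda)$ over $G_r$, i.e. that no $Q_r(\mu)$ with $\mu \ne \lambda$ is a $G_r$-summand of $T(\hat\lambda)$. I would argue by induction on $\lambda$ with respect to $\ge_{\mathbb{Q}}$, the base case being maximal weights handled by Theorem 4.1.5(a). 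For the inductive step, suppose $T(\hat\lambda)$ has $Q_r(\mu)$ as a $G_r$-summand with $\mu >_{\mathbb Q} \lambda$ (Proposition \ref{highestweightmodule} forces this ordering), and take $\mu$ maximal with this property so that $T(\hat\mu)$ is $G_r$-indecomposable by induction. I want to invoke Lemma \ref{assumeindecomposable}: it suffices to verify $\textup{Ext}^1_G((T(\hat\mu)/L(\mu))\otimes M, T(\hat\lambda)) = 0$ where $M = \textup{Hom}_{G_r}(L(\mu), T(\hat\lambda))$. Since $T(\hat\lambda)$ has a good filtration, this Ext-vanishing holds provided $(T(\hat\mu)/L(\mu))\otimes M$ has a Weyl filtration, equivalently $\textup{St}_r \otimes (T(\hat\mu)/L(\mu)) \otimes M$ has a Weyl filtration by a Steinberg-tensoring argument — wait, one needs $M^{(-r)}$-type twists. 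The point is that $M = \textup{Hom}_{G_r}(L(\mu),T(\hat\lambda))$ is, up to Frobenius twist, controlled: $\textup{Hom}_{G_r}(T(\hat\mu), T(\hat\lambda))^{(-r)}$ is tilting by Lemma 3.1.2 applied to the $\tau$-invariant... no, $T(\hat\lambda)$ is tilting hence $\tau$-invariant, so $\textup{Hom}_{G_r}(T(\hat\mu),T(\hat\lambda))^{(-r)}$ and $\textup{Hom}_{G_r}(L(\mu),T(\hat\lambda))^{(-r)}$ are tilting, and the hypothesis (b) for $\mu$ says $\textup{St}_r \otimes (T(\hat\mu)/L(\mu))$ is tilting; combining with Lemma \ref{filtrationtwists} and Andersen's tensor-product theorem for good filtrations gives the required Weyl filtration and hence the Ext-vanishing. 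Then Lemma \ref{assumeindecomposable} yields $Q_r(\mu)$ is \emph{not} a $G_r$-summand, a contradiction, completing the induction.

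The main obstacle I anticipate is the bookkeeping in (b) $\Rightarrow$ (a): precisely matching up $M = \textup{Hom}_{G_r}(L(\mu), T(\hat\lambda))$ with its Frobenius-twisted tilting incarnation, and verifying that $(T(\hat\mu)/L(\mu)) \otimes M$ has a Weyl filtration — this requires carefully combining hypothesis (b) at $\mu$ (giving $\textup{St}_r \otimes (T(\hat\mu)/L(\mu))$ tilting), Lemma \ref{filtrationtwists} (to absorb the $(r)$-twisted tilting factor $M^{(-r)}$), and the Steinberg-cancellation Proposition 3.2.1, all while keeping the $G$- versus $G_r$-module structures straight. The (a) $\Rightarrow$ (b) direction is comparatively routine once $T(\hat\lambda) \cong Q_r(\lambda)$ over $G_r$ is in hand, with Lemma \ref{G_rsplit} doing the real work.
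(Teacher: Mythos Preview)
Your outline for $(b)\Rightarrow(a)$ is structurally correct (induction on $\le_{\mathbb Q}$, Lemma~\ref{assumeindecomposable}), but the Ext-vanishing step has a genuine gap. You try to show that $(T(\hat\mu)/L(\mu))\otimes M$ has a Weyl filtration, and appeal to ``Steinberg-cancellation'' (Proposition~3.2.1). That proposition does \emph{not} let you remove an $\textup{St}_r$: it only says $\textup{St}_r\otimes V$ has a good filtration iff $\textup{St}_r^{\otimes n}\otimes V$ does. From hypothesis~(b) you get $\textup{St}_r\otimes(T(\hat\mu)/L(\mu))\otimes M$ tilting (via Lemma~\ref{filtrationtwists}), but there is no passage from this to a Weyl filtration on $(T(\hat\mu)/L(\mu))\otimes M$ itself. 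The paper sidesteps this entirely: rewrite the Ext as $\textup{Ext}^1_G\bigl(T(\hat\lambda)^*\otimes(T(\hat\mu)/L(\mu))\otimes M,\,k\bigr)$. Since $T(\hat\lambda)^*$ is a summand of $\textup{St}_r\otimes T(\lambda^0)^*$, hypothesis~(b) at $\mu$ makes $T(\hat\lambda)^*\otimes(T(\hat\mu)/L(\mu))$ tilting and $G_r$-projective; now Lemma~\ref{filtrationtwists} applies directly and the whole triple tensor is tilting, hence the Ext vanishes. The point is that the factor $T(\hat\lambda)^*$ supplies the needed $\textup{St}_r$-projectivity internally, so no cancellation is required.

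Your $(a)\Rightarrow(b)$ argument has two concrete problems. First, your application of Lemma~\ref{homsplits} with $N=L(\lambda)$ needs $\textup{Hom}_{G_r}(T(\hat\lambda),T(\hat\lambda)/L(\lambda))$ one-dimensional; but under~(a) this dimension is $[Q_r(\lambda):L(\lambda)]-1$, which is typically large. The paper uses $N=\operatorname{rad}_{G_r}T(\hat\lambda)$, so $M/N\cong L(\lambda)$ and the hypothesis is satisfied. Second, your ``cleanest route'' via Lemma~\ref{G_rsplit} requires that a $G_r$-complement to $\textup{St}_r\otimes L(\lambda)$ inside $\textup{St}_r\otimes T(\hat\lambda)$ share no $G_r$-summand types with $\textup{St}_r\otimes L(\lambda)$; there is no reason for this, and Proposition~\ref{highestweightmodule} does not give it (also, the parenthetical $\textup{St}_r\otimes L(\lambda)\cong\textup{St}_r\otimes Q_r(\lambda)$ is false). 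The paper instead argues on the level of untwisted $\textup{Hom}_{G_r}$'s: for $\mu\ne\lambda$ one has $\textup{Hom}_{G_r}(T(\hat\mu),T(\hat\lambda))=\textup{Hom}_{G_r}(T(\hat\mu),\operatorname{rad}_{G_r}T(\hat\lambda))$, and for $\mu=\lambda$ the correctly-oriented Lemma~\ref{homsplits} splits off the radical part; then \cite[Theorem~9.2.3]{KN} (run in both directions) gives that $\textup{St}_r\otimes\operatorname{rad}_{G_r}T(\hat\lambda)$ has a good filtration, and applying $\tau$ finishes.
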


\begin{proof}
($(a) \Rightarrow (b)$): Let $\mu,\lambda \in X_r(T)$.  By \cite[Theorem 9.2.3]{KN}, $\text{Hom}_{G_r}(T(\hat{\mu}),T(\hat{\lambda}))^{(-r)}$ has a good filtration.  If each $T(\hat{\lambda})$ is indecomposable over $G_r$, then when $\lambda \ne \mu$ we have
$$\text{Hom}_{G_r}(T(\hat{\mu}),T(\hat{\lambda}))^{(-r)} \cong \text{Hom}_{G_r}(T(\hat{\mu}),\text{rad}_{G_r} \, T(\hat{\lambda}))^{(-r)}.$$
Thus, in these cases $\text{Hom}_{G_r}(T(\hat{\mu}),\text{rad}_{G_r} \, T(\hat{\lambda}))^{(-r)}$ has a good filtration.  Also, by Lemma \ref{homsplits}, $\text{Hom}_{G_r}(T(\hat{\lambda}),\text{rad}_{G_r} \, T(\hat{\lambda}))^{(-r)}$ has a good filtration since it is a $G$-summand of a module with a good filtration.  Using \cite[Theorem 9.2.3]{KN} again, we find that $\textup{St}_r \otimes \text{rad}_{G_r} \, T(\hat{\lambda})$ has a good filtration.  By applying $\tau$, it then follows that $\textup{St}_r \otimes (T(\hat{\lambda})/L(\lambda))$ has a Weyl filtration.

On the other hand, since $\textup{St}_r \otimes T(\hat{\lambda})$ and $\textup{St}_r \otimes L(\lambda)$ have good filtrations, we also see that $\textup{St}_r \otimes (T(\hat{\lambda})/L(\lambda))$ has a good filtration, hence it is tilting.

($(b) \Rightarrow (a)$): Suppose that $\textup{St}_r \otimes (T(\hat{\lambda})/L(\lambda))$ is tilting for every $\lambda \in X_r(T)$.  As noted in the introduction, this implies that $\text{St}_r \otimes L(\lambda)$ is also tilting.  By Lemma \ref{untwisted} we then have that $\textup{Hom}_{G_r}(L(\mu),T(\hat{\lambda}))^{(-r)}$ is tilting for all $\lambda,\mu \in X_r(T)$.  Additionally, using the fact that $T(\hat{\lambda})^*$ is the summand of $\text{St}_r \otimes T(\lambda^0)^*$, it is not hard to see that for every $\lambda,\mu \in X_r(T)$, we have that $T(\hat{\lambda})^* \otimes T(\hat{\mu})/L(\mu)$ is tilting.  Setting $M=\textup{Hom}_{G_r}(L(\mu),T(\hat{\lambda}))$, an application of Lemma \ref{filtrationtwists} shows that $T(\hat{\lambda})^* \otimes T(\hat{\mu})/L(\mu) \otimes M$ is tilting.  Therefore,
$$\textup{Ext}^1_G((T(\hat{\mu})/L(\mu)) \otimes M, T(\hat{\lambda})) \cong \textup{Ext}^1_G(T(\hat{\lambda})^* \otimes (T(\hat{\mu})/L(\mu)) \otimes M, k) = 0.$$
Suppose now that $\lambda \in X_r(T)$ is such that $T(\hat{\mu})$ is indecomposable over $G_r$ for all $\mu \in X_r(T)$ with $\mu >_{\mathbb{Q}} \lambda$.  By the preceding arguments, we may apply Lemma \ref{assumeindecomposable} to conclude that $Q_r(\mu)$ is not a $G_r$-summand of $T(\hat{\lambda})$, therefore $T(\hat{\lambda})$ is indecomposable over $G_r$ by Proposition \ref{highestweightmodule}.  The proof for all $\lambda \in X_r(T)$ now follows by induction.
\end{proof}

\subsection{}

\begin{theorem}
Suppose that for every $\lambda \in X_r(T)$ the following hold:
\begin{enumerate}[label=(\alph*)]
\item $\textup{St}_r \otimes L(\lambda)$ is tilting.
\item $\nabla(\hat{\lambda})$ has a good $(p,r)$-filtration.
\end{enumerate}
Then every $T(\hat{\lambda})$ is indecomposable over $G_r$.
\end{theorem}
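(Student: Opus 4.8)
The plan is to combine the two hypotheses to verify the hypotheses of Lemma~\ref{assumeindecomposable} by induction on $\le_{\mathbb{Q}}$, exactly as in the proof of the previous theorem's implication $(b)\Rightarrow(a)$, so the real content is showing that hypothesis (b) here — that $\nabla(\hat{\lambda})$ has a good $(p,r)$-filtration — delivers the required $\textup{Ext}^1$-vanishing. First I would observe that since $\textup{St}_r\otimes L(\lambda)$ is tilting for all $\lambda\in X_r(T)$, Lemma~\ref{untwisted} gives that $\textup{Hom}_{G_r}(L(\mu),T(\hat{\lambda}))^{(-r)}$ has a good filtration (indeed is tilting) for all $\lambda,\mu\in X_r(T)$, and that $T(\hat{\lambda})$, being a summand of $\textup{St}_r\otimes T(\lambda^0)$, is projective over $G_r$. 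So the whole inductive machinery of Lemma~\ref{assumeindecomposable} is available; what I need is that, setting $M=\textup{Hom}_{G_r}(L(\mu),T(\hat{\lambda}))$, we have $\textup{Ext}^1_G\bigl((T(\hat{\mu})/L(\mu))\otimes M,\,T(\hat{\lambda})\bigr)=0$ whenever $T(\hat{\mu})$ is indecomposable over $G_r$.

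The key step is to reduce this $\textup{Ext}^1$-vanishing to statement~(b). By the same argument as before, $T(\hat{\mu})/L(\mu)$ is a quotient of $T(\hat{\mu})$, and when $T(\hat{\mu})$ is indecomposable over $G_r$ we have $T(\hat{\mu})/L(\mu)\cong \textup{rad}_{G_r}T(\hat{\mu})$; applying $\tau$ turns a good filtration of $\textup{St}_r\otimes\textup{rad}_{G_r}T(\hat{\mu})$ into a Weyl filtration of $\textup{St}_r\otimes (T(\hat{\mu})/L(\mu))$, and dualizing appropriately I want a \emph{good} $(p,r)$-filtration statement. Here is where hypothesis (b) enters: since $\nabla(\hat{\mu})$ has a good $(p,r)$-filtration and $\hat{\mu}=(p^r-1)\rho+\mu^0$ with $\mu^0\in X_r(T)$, one knows $\nabla^{(p,r)}(\nu)=L(\nu_0)\otimes\nabla(\nu_1)^{(r)}$, so by Lemma~\ref{filtrationtwists} each $\textup{St}_r\otimes\nabla^{(p,r)}(\nu)$ has a good filtration, hence $\textup{St}_r\otimes\nabla(\hat{\mu})$ has a good filtration, and therefore (comparing with $\textup{St}_r\otimes L(\mu^0)$, which is tilting, and using that $L(\mu^0)\hookrightarrow \nabla(\hat\mu)$ — or rather that $T(\hat\mu)$ is a summand of $\textup{St}_r\otimes T(\mu^0)$ which surjects onto $\textup{St}_r\otimes\nabla(\mu^0)$) I can run the $\textup{Ext}$ computation. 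Concretely, I would show $T(\hat{\lambda})^*\otimes (T(\hat{\mu})/L(\mu))\otimes M$ has a good filtration — using that $T(\hat{\lambda})^*$ is a summand of $\textup{St}_r\otimes T(\lambda^0)^*$, that $M^{(-r)}$ is tilting, Lemma~\ref{filtrationtwists}, and the good $(p,r)$-filtration of $\nabla(\hat\mu)$ to control $T(\hat\mu)/L(\mu)$ — so that $\textup{Ext}^1_G\bigl(T(\hat{\lambda})^*\otimes (T(\hat{\mu})/L(\mu))\otimes M,\,k\bigr)=0$, which is the desired vanishing.

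With that vanishing in hand, the induction is routine: order $X_r(T)$ by $\le_{\mathbb{Q}}$; the $\le_{\mathbb{Q}}$-maximal weights in each $G_r$-block give $T(\hat{\lambda})$ indecomposable over $G_r$ by Theorem~(a) of the previous section (or directly by Proposition~\ref{highestweightmodule}); and for the inductive step, assuming $T(\hat{\mu})$ is indecomposable over $G_r$ for all $\mu>_{\mathbb{Q}}\lambda$, Proposition~\ref{highestweightmodule} says the only possible $G_r$-summands $Q_r(\mu)$ of $T(\hat{\lambda})$ with $\mu\ne\lambda$ have $\mu>_{\mathbb{Q}}\lambda$, and Lemma~\ref{assumeindecomposable} (whose hypotheses we have just verified) rules each of these out, so $T(\hat{\lambda})$ is indecomposable over $G_r$. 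The main obstacle I anticipate is the bookkeeping in the previous paragraph: precisely pinning down why the good $(p,r)$-filtration of $\nabla(\hat\mu)$ — rather than of some other $\nabla(\nu)$ — is exactly what is needed, i.e.\ verifying that $T(\hat\mu)/L(\mu)$ is controlled (up to having a Weyl or good filtration after tensoring with $\textup{St}_r$) by $\nabla(\hat\mu)$, using that $L(\mu^0)$ is the unique simple submodule of $\nabla(\hat\mu)$ of its highest weight and that $\textup{St}_r\otimes L(\mu^0)$ is tilting. Everything else is an assembly of Lemmas~\ref{filtrationtwists}, \ref{untwisted}, \ref{assumeindecomposable} and Proposition~\ref{highestweightmodule}.
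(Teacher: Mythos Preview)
Your inductive framework via Proposition~\ref{highestweightmodule} and Lemma~\ref{assumeindecomposable} is exactly the paper's, and you correctly identify that the whole point is to extract the $\textup{Ext}^1$-vanishing from hypothesis~(b). But the passage where you actually try to use~(b) is where the argument is missing its key idea, and it contains a couple of slips.

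First, a small but genuine error: you write that when $T(\hat{\mu})$ is $G_r$-indecomposable one has $T(\hat{\mu})/L(\mu)\cong \textup{rad}_{G_r}T(\hat{\mu})$. That is not true; $L(\mu)$ here is the $G_r$-\emph{socle} of $T(\hat{\mu})$, so $T(\hat{\mu})/L(\mu)$ is the quotient by the socle, while $\textup{rad}_{G_r}T(\hat{\mu})$ is the kernel of the surjection to the $G_r$-head. They are $\tau$-dual, not isomorphic (your next sentence in fact uses this correctly). Second, your attempt to pin down the role of $\nabla(\hat{\mu})$ invokes ``$L(\mu^0)$ as the unique simple submodule of $\nabla(\hat{\mu})$,'' which is the wrong simple and the wrong end of the module.

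The observation you are missing is this. By the inductive hypothesis $T(\hat{\mu})$ is indecomposable over $G_r$, hence has simple $G_r$-head $L(\mu)$. Since $\nabla(\hat{\mu})$ sits at the top of any good filtration of $T(\hat{\mu})$, it is a quotient of $T(\hat{\mu})$ and therefore also has simple $G_r$-head $L(\mu)$. Now look at a good $(p,r)$-filtration of $\nabla(\hat{\mu})$: each section $L(\nu_0)\otimes\nabla(\nu_1)^{(r)}$ is $L(\nu_0)$-isotypic over $G_r$, so the \emph{top} section is forced to be $L(\mu)$ itself (with $\nu_0=\mu$, $\nu_1=0$). Consequently $\textup{rad}_{G_r}\nabla(\hat{\mu})$ inherits a good $(p,r)$-filtration, and then (S4) gives that $\textup{St}_r\otimes\textup{rad}_{G_r}\nabla(\hat{\mu})$ has a good filtration. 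From here one can argue exactly as in the $(b)\Rightarrow(a)$ step of the previous theorem to get that $\textup{St}_r\otimes(T(\hat{\mu})/L(\mu))$ is tilting and hence the required $\textup{Ext}^1$-vanishing. This is precisely the mechanism the paper uses, and it is the piece your ``main obstacle'' paragraph was groping for: hypothesis~(b) is used not to say something about $\nabla(\hat{\mu})$ directly, but to push the $(p,r)$-filtration down to $\textup{rad}_{G_r}\nabla(\hat{\mu})$ once the inductive hypothesis tells you the $G_r$-head is simple.
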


\begin{proof}
Let $\lambda \in X_r(T)$, and suppose that for every $\mu \in X_r(T)$ such that $\mu >_{\mathbb{Q}} \lambda$, $T(\hat{\mu})$ is indecomposable over $G_r$.  Fix some such $\mu$.  Since $\nabla(\hat{\mu})$ is at the top of any good filtration on  $T(\hat{\mu})$, it also follows that $\nabla(\hat{\mu})$ is indecomposable over $G_r$ with simple $G_r$-head $L(\mu)$.  If $\nabla(\hat{\mu})$ has a good $(p,r)$-filtration, then $L(\mu)$ must occur as the final quotient of any such filtration.  This implies that $\text{rad}_{G_r} \, \nabla(\hat{\mu})$ has a good $(p,r)$-filtration.  Since condition $\textbf{(S4)}$ holds, $\textup{St}_r \otimes \text{rad}_{G_r} \, \nabla(\hat{\mu})$ has a good filtration.  By the same reasoning as in the proof of the previous theorem, this implies that $\textup{St}_r \otimes (T(\hat{\mu})/L(\mu))$ is tilting.  Applying Lemma \ref{assumeindecomposable}, we have that $Q_r(\mu)$ is not a $G_r$-summand of $T(\hat{\lambda})$.  The proof for all $\lambda \in X_r(T)$ now follows by induction.
\end{proof}

\begin{cor}
Donkin's Good $(p,r)$-Filtration Conjecture implies Donkin's Tilting Module Conjecture.
\end{cor}

\end{document}